\documentclass[11pt,psamsfonts]{amsart}
\usepackage{a4} 
\usepackage[english]{babel}
\usepackage{amsmath,amssymb,graphicx,enumerate,bbm}



%

\usepackage{amsmath}
\usepackage{amsthm}
\usepackage{amssymb}
\usepackage{amscd}
\usepackage{amsfonts}
\usepackage{amsbsy}
\usepackage{epsfig,afterpage}
\usepackage{psfrag}
\usepackage{overpic}
\usepackage{latexsym}
\usepackage{pstcol}
\usepackage{graphicx}
\usepackage[all]{xy}

\newcommand{\R}{\ensuremath{\mathbb{R}}}

\newcommand{\la}{\lambda}

\newcommand{\e}{\varepsilon}

\newcommand{\sgn}{\operatorname{sgn}}
\newtheorem {theorem} {Theorem} [section]
\newtheorem {proposition}  {Proposition}[section]
\newtheorem {corollary}  {Corollary}[section]

\newtheorem {definition}  {Definition}[section]

\begin{document}

\title [Nonlinear Sliding and Singular Perturbation]{Nonlinear Sliding of Discontinuous Vector Fields and Singular Perturbation}

\author[  da Silva, Meza-Sarmiento, Novaes]
{P. R. da Silva $^1$,  I. S. Meza-Sarmiento $^1$ and D. D. Novaes $^2$ }

\address{$^1$  Departamento de Matem\'{a}tica --
IBILCE--UNESP, Rua C. Colombo, 2265, CEP 15054--000 S. J. Rio Preto,
S\~ao Paulo, Brazil}

\address{$^2$  IMECC -- UNICAMP, CEP 13081Ð970, Campinas, S\~{a}o Paulo, Brazil}

\email{prs@ibilce.unesp.br}
\email{isofia1015@gmail.com}
\email{ddnovaes@gmail.com}

\subjclass[2010]{ Primary 34C20, 34C26, 34D15, 34H05}

\keywords{Regularization, vector fields, singular perturbation,
non-smooth vector fields, sliding vector fields}

\maketitle

\begin{abstract}
We consider piecewise smooth vector fields (PSVF) defined in open sets $M\subseteq\R^n$ with switching manifold being a smooth surface $\Sigma$. 
The PSVF are given by pairs $X = (X_+, X_-)$, with  $X = X_+$  in $\Sigma_+$ and $X = X_-$  in $\Sigma_-$ where $\Sigma _+$ and  $\Sigma _-$
are the regions on $M$ separated by $\Sigma.$
A regularization of $X$  is a 1-parameter family  of smooth vector fields $X^{\e},\e>0,$ 
satisfying that $X^{\e}$ converges pointwise to $X$ on $M\setminus\Sigma$, when $\e\rightarrow 0$.
Inspired by the Fenichel Theory \cite{F}, the sliding and sewing dynamics on the discontinuity locus $\Sigma$ can be defined
as some sort of limit of the dynamics of a nearby smooth regularization $X^{\e}$.
While the  linear regularization requires that for every $\e>0$ the regularized field $X^{\e}$  is in the convex combination of $X_+  $ and $X_- $ the  nonlinear regularization requires only that $X^{\e}$ is in a continuous combination of $X_+  $ and $X_- $.
We prove that for both cases,  the sliding dynamics on $\Sigma$ is determined by the reduced dynamics  on the critical manifold  of a singular perturbation problem.
\end{abstract}

\section{Introduction}\label{s1}
Piecewise-smooth vector fields have been investigated at least since 1930. This kind of systems is present 
in many physical phenomenons, for instance, collisions between rigid bodies, in mechanical friction and impacts, 
electrical circuits and so ones. They also appear in control theory, economy, cell mitosis, predator-prey and  
climate problems, etc. See \cite{dBeBCK,AF} for a general scope of these topics.

A \textit{piecewise-smooth vector field (PSVF)}    is determined by three elements: a set $M\subset\R^n$, a switching set  $\Sigma\subset M$ and
a vector field $X:M\rightarrow \R^n$. The simplest case is when  $M$ is an open set  and $\Sigma=h^{-1}(0)$ 
for some smooth function $h:M\rightarrow\R$ having $0$ as a regular value. So $\Sigma$ 
divides $M$  in two regions $\Sigma^+, \Sigma^-$ and   
\begin{equation}
X=\left(\dfrac{1+\sgn(h)}{2}\right)X_+ +\left(\dfrac{1-\sgn(h)}{2}\right)X_-,
\label{pds}
\end{equation}
for some smooth vector fields $X^+$ and $X^-$on 
$M$.

The regularization method which we consider keeps the phase-portrait unchanged outside $O(\e)$-neighborhoods of the discontinuity set and it involves transition functions. A regularization of a PSVF $X:M\rightarrow \R^n$ is a 1-parameter family  of smooth vector fields $X^{\e}:M\rightarrow \R^n,\e>0,$ satisfying that 
$X^{\e}$ converges pointwise to $X$ on $M\setminus\Sigma$, when $\e\rightarrow 0.$

We denote by  $[X_-,X_+]$ the \emph{convex combination} of
$X_-$ and $X_+$:
\[[X_-,X_+]=\{(1/2+\lambda/2) X_+ +(1/2-\lambda/2 ) X_-: \lambda\in[-1,1]\}.\]

 We say that a regularization $X^{\e}:M\rightarrow \R^n$  of $X$ is \textit{linear} if $X^{\e}(p)\in[X_-(p),X_+(p)]$, for any $p\in M.$

The main example of linear regularization is the Sotomayor-Teixeira regularization proposed  in \cite{LT,ST}. 
It bases on replacing the two adjacent fields by an $\e$-parametric field  built as a linear convex combination 
of them in an $\e$-neighborhood of the discontinuity. 
A \emph{transition function} $\varphi:\R\rightarrow\R$ is used, that is, a function satisfying
$\varphi(x)=1$ for $x\geq1$, $\varphi(x)=-1$ for $x\leq1$ and $\varphi'(x)>0$ if $x\in(-1,1)$, in 
order to get a family of continuous vector fields that approximates the discontinuous one. 

\begin{definition}
The $\varphi$--Sotomayor-Teixeira regularization of  $X_-$ and $X_+$  is
the one parameter family 
\[X^{\e}= ( 1/2  + \varphi(h/\e)/2 )X_+ +  ( 1/2  - \varphi(h/\e)/2 )  X_-.\]
\end{definition}

Note that it is necessary that $X_+$ and $X_-$ to be defined on both of sides of $\Sigma$.

Different regularizations can lead to different ways of defining the sliding solutions on $\Sigma$. 
The way chosen will depend on suitability to model the problem.  
Linear regularizations are not sufficiently general for physical or biological switching processes, 
such as mechanical chatter or electrical heating, or the energy required to activate the switch at $h=0$. 
A vast expanse of non-equivalent but no less valid dynamical systems can be unconsidered.

In \cite{J} the authors considers a general model of discontinuous dynamics in terms of nonlinear sliding modes,
along with its perturbation by smoothing and its response to errors, and applied it to a heuristic model that 
captures some key characteristics of dry friction.  

A large bibliography can be found about regularization of discontinuous systems.  We refer \cite{BRT, KH1,KH2,LST,LST2,LST3,LST4,NJ} for instance.

Our  purpose  is to study  the sliding dynamics emerging from nonlinear regularization.

 A  \emph{continuous combination} of $X_-$ and $X_+$ is a 1--parameter family of smooth vector fields 
 $\widetilde{X}(\lambda, .)$,  with $\lambda\in [-1,1]\,$
 and satisfying that
\[\widetilde{X}(-1,p)=X_-(p),\quad \widetilde{X}(1,p)=X_+(p),\forall p\in M.\]

\begin{definition}
A regularization $X^{\e}$ is of the kind nonlinear if  there exists a continuous combination $\widetilde{X}$ 
such that $X^{\e}(p)\in \{\widetilde{X}(\lambda,p), \lambda \in [-1,1]\} $  for any $p\in M.$ 
\end{definition}

In the same way we define the \emph{ $\varphi$--nonlinear regularization} of  $X_-$ and $X_+$. 

\begin{definition}
A $\varphi$--nonlinear regularization of  $X_-$ and $X_+$ is the $1$--parameter family given by $X^{\e}(p)=\widetilde{X}(\varphi ( h/\e),p).$ 
\end{definition}
Note that  if $h>\e$ then $ \varphi (h/\e)=1$ and $X^{\e}= X_+$; and  if $h< -\e$ then $ \varphi (h/\e)=1$ and $X^{\e}= X_-$.

 \begin{figure}
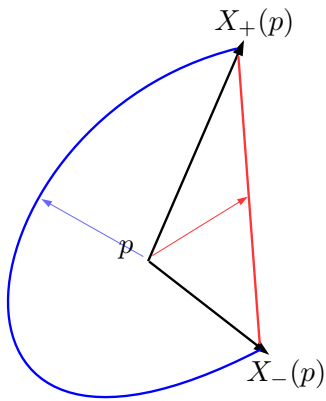

\begin{center}
\begin{overpic}[width=6cm]{convexAcont.pdf}
\put(30,50){$p$}\put(45,85){$X_+(p)$}\put(50,30){$X_-(p)$}
\end{overpic}
\end{center}
\caption{ Linear (red)  and  nonlinear (blue) regularizations.}
\end{figure}

The paper is organized as follows. In Section \eqref{s2} we recall the usual definitions
of sewing and sliding  regions and generalize them for the nonlinear case. We compare 
the classical and the new sliding regions. We also define the nonlinear sliding vector field.
Finally we state the main theorem which establishes a connection between the nonlinear sliding 
vector fields and singular perturbation problems. In Section \eqref{s3} we prove our main result 
and present an illustrative  example. In Sections \eqref{s4} and \eqref{s5} we apply our techniques in the description
of the nonlinear regularization of  normal forms of PSVF in $\R^2$ and in $\R^3$ .

\section{Preliminaries and Main Result}\label{s2}
Let $X$ be  a PSVF like \eqref{pds}. We use the notation $X(p)=(X_+(p), X_-(p))$ or simply $X=(X_+,X_-)$ and assume 
that  $X$ is bi-valuated on the  \emph{switching manifold} $\Sigma = h^{-1}(0)$. We may not have unicity of trajectories by points $p\in\Sigma$.

Denote \[ X.h(p)=\nabla h(p).X(p).\]

The points $p\in\Sigma$ are classified   as {\emph{regular} if  $(X_-. h)(p)\neq 0$ and $ (X_+. h)(p)\neq 0;$
and as \emph{singular} if $(X_-. h)(p)=0$ or $(X_+. h)(p)= 0.$ 

There are two kinds of  singular points: the equilibrium points of $X_-$ or $X_+$  on $\Sigma$ and 
the points where the trajectories of $X_-$ or $X_+$ 
are tangent to $\Sigma$. \

The regular points are classified as \textit {sewing} if $(X_-. h).(X_+. h)>0$ 
or \textit{sliding} if $(X_-. h).(X_+.h)<0.$  The sets of sewing and sliding  points are denoted by 
$\Sigma^{sw}$ and $\Sigma^{sl}$ respectively.  \

Consider the PSVF given by \eqref{pds},  a continuous combination $\widetilde{X}(\lambda,p)$ of $X_-$ and $X_+$ and a regular point $p\in\Sigma$.
\begin{itemize}
\item[(a)] We say that  $p$ is a \emph{nonlinear sewing} point and denote $p\in\Sigma^{sw}_{n}$ if  $(\widetilde{X}.h)(\lambda,p)\neq 0 $ for all $\lambda\in [-1,1].$
\item[(b)] We say that  $p$ is a \emph{nonlinear slidding} point and denote $p\in\Sigma^{sl}_n$ if  there exists $\lambda\in [-1,1],$ such that   $(\widetilde{X}.h)(\lambda,p)=0$.
\end{itemize}

\begin{proposition} Consider a PSVF given by \eqref{pds}. We have $\Sigma^{sw}_n\subseteq\Sigma^{sw}$ and $\Sigma^{sl}\subseteq\Sigma^{sl}_n.$
\end{proposition}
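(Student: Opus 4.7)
The plan is to observe that both inclusions reduce to the Intermediate Value Theorem applied to the scalar function $\lambda\mapsto (\widetilde{X}.h)(\lambda,p)$, using that a continuous combination satisfies $\widetilde{X}(-1,p)=X_-(p)$ and $\widetilde{X}(1,p)=X_+(p)$, so that the endpoints of this function are exactly $(X_-.h)(p)$ and $(X_+.h)(p)$. Continuity of this map in $\lambda$ on the compact interval $[-1,1]$ is the only analytic input needed.

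First I would prove $\Sigma^{sw}_n\subseteq\Sigma^{sw}$ by contrapositive (or directly). Take $p\in\Sigma^{sw}_n$: by definition the continuous function $g(\lambda):=(\widetilde{X}.h)(\lambda,p)$ is nowhere zero on $[-1,1]$, hence has constant sign. In particular $g(-1)=(X_-.h)(p)$ and $g(1)=(X_+.h)(p)$ share the same sign, so their product is strictly positive, giving $p\in\Sigma^{sw}$. Note that $p$ being regular (which is required in the definition of $\Sigma^{sw}_n$) ensures both endpoints are nonzero, so the strict inequality is legitimate.

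For the second inclusion $\Sigma^{sl}\subseteq\Sigma^{sl}_n$, take $p\in\Sigma^{sl}$, so $(X_-.h)(p)\cdot(X_+.h)(p)<0$. Then $g(-1)$ and $g(1)$ have opposite signs, and since $g$ is continuous on $[-1,1]$, the Intermediate Value Theorem yields some $\lambda_0\in(-1,1)$ with $g(\lambda_0)=(\widetilde{X}.h)(\lambda_0,p)=0$. This is precisely the condition for $p\in\Sigma^{sl}_n$.

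There is no real obstacle; the only subtlety is that one must interpret ``continuous combination'' as providing continuity of $\widetilde{X}(\lambda,p)$ in $\lambda$ (which the terminology and the stated boundary conditions make natural), so that $g(\lambda)=\nabla h(p)\cdot \widetilde{X}(\lambda,p)$ inherits continuity in $\lambda$ and IVT applies. I would state this continuity observation explicitly at the start of the argument so that both inclusions follow uniformly from the sign behavior of $g$ on $[-1,1]$.
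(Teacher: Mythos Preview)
Your proof is correct and follows essentially the same approach as the paper's: the paper phrases the argument geometrically (both vectors point to the same side of $\Sigma$; any continuous path connecting vectors on opposite sides must cross $\Sigma$), whereas you make the underlying Intermediate Value Theorem for $g(\lambda)=(\widetilde{X}.h)(\lambda,p)$ explicit. The analytic content is identical.
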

\begin{proof} Suppose that $p\in\Sigma^{sw}_n$. Since $(\widetilde{X}.h)(p)\neq 0 $ for all $\lambda\in [-1,1]$  then both  $X_+(p)$ and  $X_-(p)$ point toward either 
$\Sigma^+$ or to $\Sigma^-$. Thus $(X_-. h).(X_+. h)>0$ in $p$.  It follows that  $p\in\Sigma^{sw}$.  Suppose now that $p\in\Sigma^{sl}.$
It implies that $X_+(p)$ and  $X_-(p)$ are directed to opposite sides. Thus all continuous path connecting them intersects $\Sigma$. It means that $p\in\Sigma^{sl}_n.$
\end{proof}

Consider the PSVF given by \eqref{pds} and a continuous combination of $X_-$ and $X_+$. For each $p\in \Sigma^{sl}_n$
there exists  $\lambda (p)\in [-1,1]$ such that  $(\widetilde{X}.h)(p)=0$. 
We say that $\widetilde{X}(\lambda (p),p)$ is a \emph{nonlinear sliding vector field}.\\

\noindent \textbf{Example 1.}  Let $X=(X_+,X_-)$ be a PSVF defined on $\R^2$ with $h(x,y)=y$,
$X_+=(1,1-x)$ and  $X_-=(1,3-x).$ Consider the continuous combination of $X_-$ and $X_+$  given by 
$\widetilde{X}=(-1+2\lambda^2, -\lambda+2\lambda^2-x)$.  Solving $-\lambda+2\lambda^2-x=0$ in the variable $\lambda$ we determine
two possible sliding vector fields
\[X^{\lambda_1}=\left((-3+4x+\sqrt{1+8x})/4,0\right),\quad X^{\lambda_2}=\left((-3+4x-\sqrt{1+8x})/4,0\right).\]

Note that $x\geq-1/8$  is a necessary  condition. Moreover, we also require $|\lambda_1|<1$ and $|\lambda_2|<1.$
It implies that $\Sigma^{sl}=(1,3)$ and $\Sigma^{sl}_n=[-1/8,1)\cup(1,3)$.  In $[-1/8,1)$ are defined two  nonlinear sliding vector fields  ($X^{\lambda_1}$
and $X^{\lambda_2}$ ) and on $(1,3)$ is defined only $X^{\lambda_2}.$ \label{ex2}\

Consider a PSVF as given by \eqref{pds}, a transition function $\varphi$ and  the 
$\varphi$-- nonlinear regularization of  $X_-$ and $X_+$.  Our main result is the following.

\begin{theorem}\label{mainteo}Consider a PSVF as given by \eqref{pds} with a continuous combination $\widetilde{X}$ of $X_+$ and $X_-$ and  nonlinear sliding region $\Sigma^{sl}_n$.
There exists a singular perturbation problem 
\begin{equation} r\dot{\theta}=\alpha(r,\theta,p)=-\sin\theta\alpha_0(r,\theta,p),\quad \dot{p}=\beta(r, \theta,p),\label{eqteo}\end{equation}
with $r\geq0,\theta\in[0,\pi],p\in\Sigma^{sl}_n$ and critical manifold $\mathcal {S}$ satisfying the following.
\begin{itemize}
\item[(a)] For any normally hyperbolic  $(\theta,p)\in\mathcal{S}$ there exist homeomorphic neighborhoods $p\in I_p\subset \Sigma^{sl}_n$ and $\mathcal{S}_p\subset \mathcal {S}$
and a sliding vector field $\widetilde{X}(\lambda (p),p)$ defined in $I_p$ which   is conjugated to the slow flow of \eqref{eqteo} on $\mathcal{S}_p\subset \mathcal {S}$.
\item[(b)] For any $p\in int(\Sigma^{sl}_n)$ consider $\ell=\# \{\theta\in(0,\pi):(\theta,p)\in\mathcal{S}\}. $ There exist $\ell$ choices of sliding vector fields defined in $p$.
\item[(c)] If all points on $\mathcal{S}$ are normally hyperbolic then there exists only one choice for the sliding  vector field in  $\Sigma^{sl}_n$.
\end{itemize}
\end{theorem}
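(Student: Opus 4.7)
The plan is to realize the $\varphi$--nonlinear regularization $X^\e$ as the principal chart of a polar blow-up of the switching manifold in the direction of the regularization parameter $\e$, and then read off parts (a)--(c) from the geometry of the critical manifold together with Fenichel's theorem.

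Concretely, near a point $p_0\in\Sigma^{sl}_n$, I would first pick local coordinates $(u,p)\in\R\times\R^{n-1}$ with $h(u,p)=u$, so that $\Sigma=\{u=0\}$, and split the continuous combination as $\widetilde X(\lambda,u,p)=(F(\lambda,u,p),G(\lambda,u,p))$ with $F$ the $u$--component and $G$ the $p$--components. The regularized ODE is then
\[
\dot u=F(\varphi(u/\e),u,p),\qquad \dot p=G(\varphi(u/\e),u,p).
\]
Appending the trivial equation $\dot\e=0$ and performing the polar blow-up $u=r\cos\theta,\ \e=r\sin\theta$ with $r\ge 0$ and $\theta\in[0,\pi]$, the constraint $\dot\e=\dot r\sin\theta+r\cos\theta\,\dot\theta=0$ forces $\dot u=-r\dot\theta/\sin\theta$, while $u/\e=\cot\theta$. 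A short algebraic manipulation then brings the blown-up system to the form
\[
r\dot\theta=-\sin\theta\,\alpha_0(r,\theta,p),\qquad \dot p=\beta(r,\theta,p),
\]
with $\alpha_0(r,\theta,p)=F(\varphi(\cot\theta),r\cos\theta,p)$ and $\beta(r,\theta,p)=G(\varphi(\cot\theta),r\cos\theta,p)$. This is exactly \eqref{eqteo} with $r$ as the singular parameter.

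Next I would identify the critical manifold $\mathcal S=\{(\theta,p):\sin\theta\,F(\varphi(\cot\theta),0,p)=0\}$. Since $\varphi\circ\cot$ is a continuous decreasing map from $[0,\pi]$ onto $[-1,1]$, constantly $\pm 1$ on $[0,\pi/4]\cup[3\pi/4,\pi]$ and a diffeomorphism from $(\pi/4,3\pi/4)$ onto $(-1,1)$, the substitution $\lambda=\varphi(\cot\theta)$ sets up a bijection between the interior part of $\mathcal S$ over $\{p\}$ and the $\lambda$--roots in $[-1,1]$ of $(\widetilde X\cdot h)(\cdot,p)=0$; by construction these roots enumerate the nonlinear sliding choices at $p$. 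This proves (b) directly, and shows that the slow flow on $\mathcal S$, namely $\dot p=G(\varphi(\cot\theta(p)),0,p)$, is precisely the nonlinear sliding vector field $\widetilde X(\lambda(p),p)$.

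The key technical ingredient for (a) and (c) is the equivalence of normal hyperbolicity at $(\theta_*,p_*)\in\mathcal S$ with the simple--root condition $\partial_\lambda(\widetilde X\cdot h)(\lambda(p_*),p_*)\neq 0$. Differentiating $-\sin\theta\,\alpha_0(0,\theta,p)$ in $\theta$ on $\mathcal S$, where $F$ vanishes, produces a factor proportional to $\partial_\lambda F(\lambda,p)\,\varphi'(\cot\theta)/\sin\theta$; as $\varphi'(\cot\theta)>0$ and $\sin\theta>0$ on $(\pi/4,3\pi/4)$, this factor controls the sign and vanishing of the transverse Jacobian. Fenichel's theorem \cite{F} then yields a locally invariant slow manifold close to $\mathcal S$ together with a regular perturbation of the slow flow, and pulling back through the blow-down gives the conjugacy claimed in (a). For (c), the same simple-root condition applied globally, combined with the Fenichel uniqueness of the slow manifold attached to each component of $\mathcal S$, rules out branch-switching and selects a single, globally consistent sliding vector field on $\Sigma^{sl}_n$. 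I expect this last global selection step of (c) to be the main obstacle, since the local counting in (b) allows $\ell$ to vary over $\Sigma^{sl}_n$ and upgrading to a single global choice requires carefully tracking how the branches of $\mathcal S$ can meet or terminate over $\Sigma^{sl}_n$, which the normal hyperbolicity hypothesis is exactly designed to preclude.
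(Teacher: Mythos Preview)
Your approach is essentially the paper's: flatten $\Sigma$ in local coordinates, apply the polar blow-up $u=r\cos\theta$, $\e=r\sin\theta$, and identify the slow manifold with the nonlinear sliding set via the substitution $\lambda=\varphi(\cot\theta)$. The construction and the proof of (b) match the paper almost verbatim.

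There are, however, two points where you diverge in execution. First, the paper chooses a \emph{strictly increasing} transition function (e.g.\ $\varphi=\tanh$) so that $\psi=\varphi\circ\cot:(0,\pi)\to(-1,1)$ is a global diffeomorphism; this avoids the flat segments on $[0,\pi/4]\cup[3\pi/4,\pi]$ that you carry along, and makes the bijection between $\theta$--roots and $\lambda$--roots clean on the whole open interval rather than only on $(\pi/4,3\pi/4)$. Second, for (a) you invoke Fenichel's theorem, but the claim concerns only the $r=0$ reduced system: the slow flow $\dot p=G(\psi(\theta(p)),0,p)$ is \emph{literally} the nonlinear sliding field $\widetilde X(\lambda(p),p)$ under the change of variables $\lambda=\psi(\theta)$, so the conjugacy is obtained by direct algebraic identification and no persistence result is needed. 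Similarly, your anticipated ``main obstacle'' in (c) does not materialize in the paper's argument: normal hyperbolicity at every point of $\mathcal S$ is used simply to apply the implicit function theorem and conclude that $\alpha_0(0,\theta,p)=0$ defines a single graph $\theta=\theta(p)$, hence a single $\lambda(p)$, with no appeal to Fenichel uniqueness or branch-tracking.
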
 

The  zero set $\mathcal{S}=\left\{(\theta,p):\alpha_0(0,\theta,p)=0\right\}$ is called \emph{slow manifold} 
and  a point $q=(\theta,p)\in\mathcal {S}$ is \textit{normally hyperbolic} is $\dfrac{\partial\alpha_ 0}{\partial\theta}(0,q)\neq0.$\\

 If we consider a time-rescaling, system \eqref{eqteo} becomes
\begin{equation} 
\theta'=\alpha(r,\theta,p),\quad p'=r\beta(r,\theta,p)
 \label{fast}\end{equation}
 
In general we refer to systems \eqref{eqteo} and \eqref{fast} as \emph{ slow}  and \emph{fast} systems respectively.  
If $r=0$ in \eqref{eqteo} we have the \emph{reduced} system and if $r=0$ in \eqref{fast} we have the \emph{layer} system.
Note that $\mathcal {S}$ is a set of equilibrium points of the layer system which has a vertical flow.
Slow and fast systems are equivalent for $\e>0$, that is, they have the same phase portrait.
It means that the phase portrait of  system \eqref{eqteo}  for $\e\sim 0$ approaches two limit
situations: the phase portrait of reduced and layer systems.

Our theorem establishes a connection between a singular perturbation problem and the sliding system in $\Sigma$.
We obtain that the possible sliding dynamics are identified with  reduced dynamics on the slow manifold of the singular perturbation problem. 
More specifically, the possible sliding systems are projections ($\mathcal{S}\rightarrow \Sigma $) of the slow flow.
This is only an initial step in the analysis of the regularization. 
The advantage of our approach is that we can use  GSP-theory techniques. However, the major obstacle to the overall understanding of the regularized system 
is the existence of non--normally hyperbolic points in the slow manifold. In this case additional efforts need to be made.
See for instance \cite{DR, J1, KS}.

In \cite{PS} the autors study the regularization  of an oriented 1-foliation $\mathcal{F}$ on $M \setminus \Sigma$ where $M$ is a  smooth manifold 
and $\Sigma \subset M$ is a closed subset, which can be interpreted as the discontinuity locus of $\mathcal{F}$.

\section{Proof of the Main Theorem} \label{s3}
In this section we prove our main result and consider an illustrative example.

\begin{proof}[Proof of the main Theorem.]
Take a PSVF $X$ and a continuous combination $\widetilde{X}$  as in the statement. Choose local coordinates $(x,y)=(x_1,...,x_{n-1},y)$ 
such that $h(x,y)=y$ and $\widetilde{X}(\lambda,x,y)=(s_1(\lambda,x,y),....,s_n(\lambda,x,y))$. Thus $\Sigma=\{p=(x,0)\}$ and
\[\Sigma^{sl}_n=\{p=(x,0)\in\Sigma:\exists\lambda\in(-1,1),\widetilde{X}(\lambda,x,0)(0,...,0,1)=s_n(\lambda,x,0)=0\}.\]
Consider a strictly increasing transition function $\varphi$, that is, it tends to $1$ and $-1$ without actually achieve these values, for instance $\varphi(s)=\tanh(s)$.
 The $\varphi$--nonlinear regularization is given by
\[\widetilde{X}(\varphi(y/\e),x,y)=(s_1(\varphi(y/\e),x,y), ..., s_n(\varphi(y/\e),x,y)).\]
Its  trajectories  are determined by the system
\begin{equation}x_i'=s_i(\varphi(y/\e),x,y)\quad y'=s_n(\varphi(y/\e),x,y).,\quad i=1,...,n-1.\label{singX}\end{equation}
The nonlinear sliding vector field is $\widetilde{X}(\lambda,x,y)$ with $\lambda$ satisfying that $s_n(\lambda,x,y)=0.$\

Consider the blow up $y=r\cos\theta$ and $\e=r\sin\theta$ with $r \geq 0$ and $\theta\in[0,\pi].$
Denote $\psi(\theta)=\varphi(\cot\theta)$ which  is an injective decreasing function with $\psi(0)=1,\psi(\pi)=-1$. Thus
the system \eqref{singX} become
\[r\dot{\theta}=-\sin\theta s_n(\psi (\theta),x,r\cos\theta),\quad \dot{x_i}=s_i(\psi (\theta),x,r\cos\theta), \quad i=1,...,n-1.\]
Then  \[\alpha_0 (r,\theta,x)=s_n(\psi (\theta),x,r\cos\theta)\] and \[\beta(r,\theta,x)=(s_1(\psi (\theta),x,r\cos\theta), ..., s_{n-1}(\psi (\theta),x,r\cos\theta))\]
determine the singular perturbation desired.\\
\noindent \emph{(a).} Note that slow manifold and nonlinear sliding region are defined by the same equation : $s_n(\psi (\theta),x,0)=0$ for the slow manifold and
 $s_n(\lambda,x,0)=0$ for the sliding. Thus the local diffeomorphism  is immediate because $\psi (\theta)$ is decreasing in $(-1,1)$. Moreover the slow flow is determined 
 by the reduced system $\dot{x_i}=s_i(\psi(\theta),x,0)$ and the nonlinear sliding vector field by  $\dot{x_i}=s_i(\lambda,x,0)$. Since $\psi(\theta)$ and $\lambda$ have the same expression 
 it concludes (a).\\
 \noindent \emph{(b).} The number of possible choices of nonlinear sliding vector fields is the number of
 possible choises of $\lambda$ such that $s_n(\lambda,x,0)=0$. Since the number of choices of $\lambda$ is 
 exactly the same that the number of $\theta=\theta(x,0)$ defined implicitly by $s_n(\psi(\theta),x,0)=0$ , the statement (b) is proved.\\
  \noindent \emph{(c).}  The normal hyperbolicity of the points on $\mathcal{S}$ implies that the graphic 
  implicitly defined by $s_n(\psi(\theta),x,0)=0$ is the graphic of only one $\theta=\theta(x)$. It means that 
  $s_n(\lambda,x,0)=0$ defines uniquely $\lambda=\lambda(x).$  So the statement (c) is proved. 
  \end{proof}
  
\begin{figure}[h!]
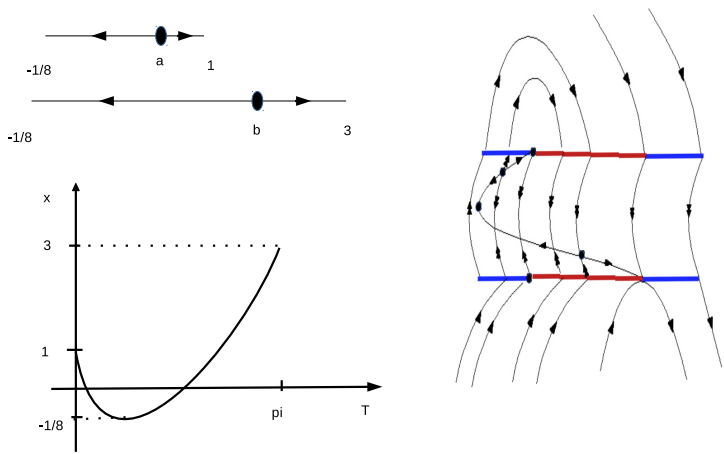

\epsfysize=8cm \centerline{\epsfbox{svf1.pdf}\epsfbox{figDP1a.pdf}}\caption{\small{The nonlinear sliding vector field $X^{\lambda_1}$ with $a=1-\sqrt{2}/2$ and 
the nonlinear sliding vector field $X^{\lambda_2}$ with $b=1+\sqrt{2}/2.$ Sewing (blue) and sliding (red) regions and the blow up of a nonlinear regularization of 
$X_-$ and $X_+$  producing a 
nonlinear sliding.}}\label{figsvf1}
\end{figure}

We could apply the direcional blow up in the proof of the main theorem. The direcional blow up consists in the
following change of coordinates:
\[\Gamma:\R^{n+1}\rightarrow \R^{n+1},\quad
(x,y,\e)=\Gamma(x,\overline{y},\overline{\e})=(x,\overline{\e}.\overline{y},\overline{\e}).\] This blow up was considered by the authors in 
\cite{BPT} and in \cite{NJ}. 
However, geometrically speaking, it is more convenient to consider
the  polar blow up coordinates

\[\Lambda:[0,+\infty)\times[0,\pi]\times\R^{n-1}\rightarrow\R^{n+1},\quad (x,y,\e)=\Lambda(r,\theta,x)=(x,r \cos \theta,r \sin \theta).\] 
The map $\Lambda$ induces the  vector field on $[0,+\infty)\times[0,\pi]\times\R^{n-1}$.
The parameter value $\e=0$ is now represented by $r=0$. We observe that the direcional blow up and the polar blow
up are essentially the same. In fact, if we consider the map
$G:C=[0,+\infty)\times[0,\pi]\times\R^{n-1}\rightarrow\R^{n+1}$ given by
$G(r,\theta,\pi)=(\cot\theta,y,r\sin\theta)$ then $\Gamma\circ
G=\Lambda.$

\[\xymatrix{
C \ar[r]^{\Lambda}
\ar[d]_G & \R^{n+1}\\
\R^{n+1} \ar[ru]_{\Gamma} }\]

 \noindent\textbf{ Example 2. } Let $X=(X_+,X_-)$ as in Example 1 We consider the nonlinear regularization given by $X^{\e}=\widetilde{X}(\varphi (y/\e),x,y)$.
The trajectories of $X^{\e}$ satisfies the system
\[x'=-1+2\varphi(y/\e)^2,\quad y'=-\varphi(y/\e)+2\varphi(y/\e)^2-x.\]
Consider the blow up $y=r\cos\theta$ and $\e=r\sin\theta$ with $r\geq 0$ and $\theta\in[0,\pi]$. Thus, denoting $\psi(\theta)=\varphi(\cot\theta)$, the system becomes
\[r\dot{\theta}=-\psi(\theta)+2\psi(\theta)^2-x,\quad \dot{x}=-1+2\psi(\theta)^2.\]
The slow manifold is given by $x=-\psi(\theta)(2\psi(\theta)-1).$
It is easy to see that $x$ is a smooth curve connecting $(\theta,x)=(0,1)$ and $(\theta,x)=(\pi,3)$. Moreover the derivative
of $x$ is $x'=\psi'(4\psi-1)$ and it is zero if $\psi=\dfrac{1}{4}$. In this case $x=-1/8$. The slow flow is given by
$x'=(-3+4x\pm\sqrt{1+8x})/4$
which is exactly the same expression of the sliding $X^{\lambda_1}$ and $X^{\lambda_2}.$ See figure \eqref{figsvf1}.

\section{Nonlinear regularization of Generic PSVF's on $\R^2$}\label{s4}

In this section normal forms of generic PSVF 
(sewing, sliding, saddle and fold regular) on $\R^2$  are discussed.
We  use singular perturbation techniques to analyze the dynamics of their $\varphi$--nonlinear regularizations.
From now on we are considering the continuous combination
\begin{equation}\label{contcomb}\widetilde{X}=\dfrac{1+\la}{2}X_+(x,y)+\dfrac{1-\la}{2}X_-(x,y)+(P(\la),Q(\la))\end{equation}
with $P,\,Q$ real polynomials of degree $m$ and 
 $n$ respectively, satisfying the condition $P(\pm 1)=0$ and $Q(\pm 1)=0$.\\
 
 We recall that $\varphi$ denotes a strictly increasing transition function and $\psi(\theta)=\varphi(\cot\theta)$.

\subsection{Sewing}

We say that $X_N=(X_+,X_-)$  is the normal form of the sewing PSVF defined on $\R^2$ if   $X_+(x,y)=(a,b)$, 
$X_-(x,y)=(c,d)$, $a,\,b,\,c, \,d\in\R\setminus\{0\}$, $bd>0$, $h(x,y)=y$ and
 $\Sigma=h^{-1}(0)$. In this case we have $\Sigma=\Sigma^{sw}$. \

\begin{proposition} Let $X_N=(X_+,X_-)$ be the normal form of the sewing PSVF defined on $\R^2$ and consider
the continuous combination \[\widetilde{X}=(a/2+c/2+(a/2-c/2)\la+P(\la),b/2+d/2+(b/2-d/2)\la+Q(\la)).\] 
\begin{itemize}
\item [(a)] There exists a real continuous function $\Delta(\la)$ such that: If $\Delta$ has $k\leq n$ 
real roots $\la_{i}\in[-1,1]$, $i=1,\ldots,\,k$ then $\Sigma=\Sigma^{sl}_n$; if  $\Delta(\la)\neq0$,
 for all $\la\in[-1,1]$ then $\Sigma^{sw}_{n}=\Sigma$.
\item [(b)] For the case that $\Sigma^{sl}_{n}=\Sigma$ the singular perturbation problem \eqref{eqteo}
 in the Theorem \ref{mainteo} is such that the slow manifold is the union of $k\leq n$
  lines $\theta=\theta_i,\,i=1,\ldots,k$. Moreover the slow flow on $\theta=\theta_i$ is 
  determined by $\dot{x}=\sgn((a+c)/2+\psi(\theta_i)(a-c)/2+P(\psi(\theta_i))).$
\end{itemize}

\end{proposition}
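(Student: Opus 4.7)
The plan is to identify a single scalar function whose roots in $[-1,1]$ simultaneously control the classification of $\Sigma$ and the geometry of the slow manifold, after which both parts follow from a direct application of the construction in the proof of Theorem \ref{mainteo}.

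For part (a), since $h(x,y)=y$ we have $\nabla h=(0,1)$, so $(\widetilde{X}\cdot h)(\lambda,p)$ equals the second component of $\widetilde{X}$. Because $b,d,Q$ are independent of $p$, this is a function of $\lambda$ alone, and I would set
\[
\Delta(\lambda):=\frac{b+d}{2}+\frac{b-d}{2}\lambda+Q(\lambda).
\]
By the definitions in Section \ref{s2}, a point $p\in\Sigma$ lies in $\Sigma^{sl}_n$ iff $\Delta$ has a root in $[-1,1]$, and in $\Sigma^{sw}_n$ iff $\Delta$ is nowhere zero there; both properties are independent of $p$, giving the dichotomy claimed. The hypothesis $Q(\pm 1)=0$ yields $\Delta(1)=b$ and $\Delta(-1)=d$, both nonzero, so the real roots of $\Delta$ all lie in the open interval $(-1,1)$, and there are at most $n=\deg Q$ of them.

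For part (b), I would apply verbatim the construction from the proof of Theorem \ref{mainteo}. The trajectories of the $\varphi$-nonlinear regularization satisfy
\[
x'=\frac{a+c}{2}+\frac{a-c}{2}\varphi(y/\e)+P(\varphi(y/\e)),\qquad y'=\Delta(\varphi(y/\e)),
\]
and after the polar blow-up $y=r\cos\theta$, $\e=r\sin\theta$ with $\psi(\theta)=\varphi(\cot\theta)$ the slow system becomes
\[
r\dot\theta=-\sin\theta\,\Delta(\psi(\theta)),\qquad \dot x=\frac{a+c}{2}+\frac{a-c}{2}\psi(\theta)+P(\psi(\theta)).
\]
Thus $\alpha_0(r,\theta,x)=\Delta(\psi(\theta))$ depends only on $\theta$, and $\mathcal{S}$ is a union of vertical lines. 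Since $\psi:(0,\pi)\to(-1,1)$ is a strictly decreasing bijection, the $k$ roots $\lambda_i\in(-1,1)$ of $\Delta$ pull back bijectively to angles $\theta_i=\psi^{-1}(\lambda_i)$, producing exactly the $k$ lines $\theta=\theta_i$. On each line the reduced equation is the autonomous constant-velocity ODE $\dot x=(a+c)/2+(a-c)\psi(\theta_i)/2+P(\psi(\theta_i))$, whose qualitative dynamics on $\Sigma$ is read off from the sign of this constant, yielding the stated formula.

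The only delicate point will be checking normal hyperbolicity on each line $\theta=\theta_i$, which is required in order to legitimately invoke Theorem \ref{mainteo}(a); this reduces to verifying that $\partial_\theta\Delta(\psi(\theta_i))\neq 0$, equivalently that $\lambda_i$ is a simple root of $\Delta$. The possibility of a root escaping to the boundary of $[-1,1]$ is already excluded by $\Delta(\pm 1)=b,d\neq 0$, and the \emph{a priori} sign constraint $\Delta(-1)\Delta(1)=bd>0$ forces an even number of sign changes, which fits naturally with the statement being about counts rather than parities. Any degenerate multiple-root configurations can be treated either as limiting cases or by a small perturbation within the admissible class of polynomials $Q$ with $Q(\pm 1)=0$.
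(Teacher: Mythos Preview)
Your proposal is correct and follows essentially the same route as the paper: define $\Delta(\lambda)=(b+d)/2+(b-d)\lambda/2+Q(\lambda)$, observe that membership in $\Sigma^{sl}_n$ versus $\Sigma^{sw}_n$ is decided entirely by whether $\Delta$ vanishes on $[-1,1]$, then write down the blown-up system and read off the slow manifold and reduced flow. Your argument is in fact slightly more detailed than the paper's---you make explicit that $\Delta(\pm1)=b,d\neq0$ forces roots into $(-1,1)$, and that $\psi$ is a bijection onto $(-1,1)$---and your final paragraph on normal hyperbolicity and parity is additional commentary not present in the paper's proof (nor required by the proposition as stated), but none of this diverges from the paper's approach.
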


\begin{proof}
Let $\widetilde{X}$ be the continuous combination of $X$ given by the statement
of the theorem. Let $\Delta$ be the real continuous function given by 
\begin{equation}\label{eqsliding}
\Delta=\Delta(\la):=(b/2+d/2)+(b/2-d/2)\la+Q(\la).
\end{equation} 
According with the definition $p\in\Sigma^{sl}_n$ if there exists $\lambda\in [-1,1],$ satisfying 
 $(\widetilde{X}.h)(p)=\Delta=0$. Thus if exists $k\leq n$ real roots of $\Delta$, $\la_{i}\in[-1,1]$, $i=1,\ldots,\,k$, the nonlinear sliding 
  region $\Sigma^{sl}_n\neq\emptyset$, in fact, $\Sigma^{sl}_n=\Sigma$. 
  Otherwise, $\Sigma^{sw}_{n}=\Sigma$ and the statement (a) is proved.

By the Theorem \ref{mainteo}, there exists a singular perturbation problem, 
$r\geq0$, $x\in\Sigma^{sl}_n$, given by
\begin{eqnarray}
r\dot{\theta}&=&-\sin\theta\left((b/2+d/2)+(b/2-d/2)\psi(\theta)+Q(\psi(\theta))\right),\nonumber\\
\dot{x}&=&(a+c)/2+\psi(\theta)(a-c)/2+P(\psi(\theta))\nonumber.
\end{eqnarray}

The slow manifold for $\theta\in(0,\pi)$ is determined by the equation
 $(b/2+d/2)+(b/2-d/2)\psi(\theta)+Q(\psi(\theta))=0$,
 then it is the union of $k\leq n$
  lines $\theta=\theta_i,\,i=1,\ldots,k$.  The slow flow is given by the solutions of the reduced problem represented by
\begin{eqnarray}0&=&-\sin\theta\left((b+d)+(b-d)\psi(\theta)+Q(\psi(\theta))\right)\\ \dot{x}&=&(a+c)/2+\psi(\theta)(a-c)/2+P(\psi(\theta)).\end{eqnarray}

So, for $r=0$ the dynamics on each connected component of the slow manifold is 
given by $\sgn(\dot{x})$ and (b) is proved.
\end{proof}

\noindent \textbf{Example 3.} Consider the PSVF $X_N=(X_+,X_-)$ with $X_+=(1,-1)$, $X_-=(2,-1)$ and the continuous combination 
\[\widetilde{X}=(1/2-\lambda/2 +{\lambda}^2,  1-\lambda -2 {\lambda}^2 +{\lambda}^3 ).\]
The corresponding  slow--fast system is  
\[r\dot{\theta}=-\sin\theta( 1-\lambda - 2{\lambda}^2 +{\lambda}^3),\quad \dot{x}=1/2-\lambda/2 +{\lambda}^2,\]
with $\lambda=\varphi(\cot\theta).$  Since $1-\lambda - 2{\lambda}^2 +{\lambda}^3=0$ has two solutions 
$\lambda_1\in (-1,0)$ and  $\lambda_2\in (0,1)$ the slow manifold has two 
components $\theta=\theta_1\in(0,\pi/2)$ and $\theta=\theta_2\in(\pi/2,\pi).$ \
Finally  $1/2-\lambda/2 +{\lambda}^2>0$ for $\lambda\in[-1,1]$ implies that the slow flow is equivalent to $\dot{x}=1.$
See figure \eqref{figFNsewing}.

\subsection{Saddle}
We say that $X_N=(X_+,X_-)$  is the normal form of the saddle PSVF defined on $\R^2$ if 
$X_+(x,y)=(x,-1-y)$, $X_-(x,y)=(x,1-y)$,  $h(x,y)=y$ and
 $\Sigma=h^{-1}(0)$. In this case we have $\Sigma=\Sigma^{s}$. 

\begin{proposition}\label{teo-saddlens} Let $X_N=(X_+,X_-)$ be the normal form of the saddle PSVF  and the continuous combination
\[\widetilde{X}=(P(\la)+x,-\la+Q(\la)-y).\] 

\begin{itemize}
\item [(a)] $\Sigma=\Sigma^{sl}_n$ and there exists a real continuous function $\Delta=\Delta(\la)$ such that if $\Delta$ has $k\leq n$ 
real roots $\la_{i}\in[-1,1]$, $i=1,\ldots,\,k$ then the singular perturbation problem \eqref{eqteo} in the Theorem \ref{mainteo} 
is such that the slow manifold is the union of $k$ lines $\theta=\theta_i,\,i=1,\ldots,k$.
\item [(b)] The slow flow on $\theta=\theta_i$ follows the positive direction of $x$-axis if $x>-P(\psi(\theta_i))$ and
follows the negative direction of $x$-axis if $x<-P(\psi(\theta_i))$.
\end{itemize}
\end{proposition}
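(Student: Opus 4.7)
The plan is to follow the template set by the sewing case, but now using the specific form of the saddle normal form. First I would compute $\widetilde{X}\cdot h$ directly from the definition of the continuous combination $\widetilde{X}=(P(\la)+x,\,-\la+Q(\la)-y)$: evaluating at a point $p=(x,0)\in\Sigma$ gives
\[
(\widetilde{X}\cdot h)(\la,x,0)=-\la+Q(\la),
\]
so I would naturally define $\Delta(\la):=-\la+Q(\la)$. The key observation is that the boundary conditions $Q(\pm1)=0$ force $\Delta(-1)=1>0$ and $\Delta(1)=-1<0$, so by the intermediate value theorem $\Delta$ always has at least one root in $(-1,1)$, regardless of the choice of polynomial $Q$. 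This root does not depend on $x$, so \emph{every} point of $\Sigma$ is a nonlinear sliding point, giving $\Sigma=\Sigma^{sl}_n$.

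Next, for part (a), I would invoke Theorem \ref{mainteo} with the blow-up $y=r\cos\theta$, $\e=r\sin\theta$, and substitute $\psi(\theta)=\varphi(\cot\theta)$ into $\widetilde{X}$. The resulting slow--fast system is
\[
r\dot{\theta}=-\sin\theta\bigl(-\psi(\theta)+Q(\psi(\theta))-r\cos\theta\bigr),\qquad \dot{x}=P(\psi(\theta))+x.
\]
Setting $r=0$, the slow manifold is implicitly defined by $-\psi(\theta)+Q(\psi(\theta))=0$, i.e.\ by $\Delta(\psi(\theta))=0$. Since $\psi:(0,\pi)\to(-1,1)$ is a strictly decreasing bijection, each real root $\la_i\in(-1,1)$ of $\Delta$ corresponds to a unique $\theta_i\in(0,\pi)$, and the slow manifold is the union of the $k$ horizontal lines $\theta=\theta_i$, $i=1,\ldots,k$.

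For part (b), since the slow manifold has $\theta$ locally constant along each component, the reduced equation on $\theta=\theta_i$ is simply
\[
\dot{x}=P(\psi(\theta_i))+x,
\]
so $\dot{x}>0$ precisely when $x>-P(\psi(\theta_i))$ and $\dot{x}<0$ precisely when $x<-P(\psi(\theta_i))$, which is the claim. No significant obstacle is anticipated: the argument is essentially a direct calculation, with the only subtle point being the use of the boundary conditions $Q(\pm1)=0$ to guarantee the existence of at least one root of $\Delta$, which is what makes part (a) unconditional (in contrast to the sewing case, where the existence of a root was hypothesis-dependent).
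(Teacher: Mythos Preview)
Your proposal is correct and follows essentially the same route as the paper's own proof: define $\Delta(\la)=-\la+Q(\la)$, use $Q(\pm1)=0$ and the intermediate value theorem to get $\Sigma=\Sigma^{sl}_n$, apply the polar blow-up of Theorem \ref{mainteo} to obtain the slow--fast system, and read off the slow manifold and reduced flow. Your write-up is in fact slightly more explicit than the paper's in justifying the one-to-one correspondence $\la_i\leftrightarrow\theta_i$ via the bijectivity of $\psi$, but the arguments are otherwise identical.
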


\begin{proof} Let $\widetilde{X}$ be the continuous combination of $X_N$ given by the statement of the proposition. Then
$\widetilde{X}=\left(P(\la)+x,-\la+Q(\la)-y\right).$ Using our definition, the nonlinear sliding region $\Sigma_n^{sl}$ in $\Sigma$
  is given by the solutions of $ (\widetilde{X}.h)(p)=0,\, \lambda\in [-1,1]$. So, let us consider the continuous function 
  $\Delta=\Delta(\la)=-\la+Q(\la)$. Since $\Delta(1)=-1$ and $\Delta(-1)=1$, the intermediate value theorem guarantees 
  that $\Sigma=\Sigma^{sl}_n$. Now, we consider the polar blow up coordinates given by 
   $y=r\cos\theta$ and $\e=r\sin\theta$, with $r\geq0$ and $\theta\in[0,\pi]$. Using these coordinates the parameter value $\e=0$ is 
   represented by $r=0$ and the blow up induces the vector field given by
\[r\dot{\theta}=-\sin\theta\left(-\psi(\theta)+Q(\psi(\theta))-r\cos\theta\right),\quad 
\dot{x}=P(\psi(\theta))+x.\] 
The slow manifold is $\mathcal{S}=\{(\theta_i,x)\in(0,\pi)\times\mathbb{R}:-\psi(\theta_i)+Q(\psi(\theta_i))=0\}$. 
The slow flow is given by the solutions of the reduced problem represented by
\begin{equation}\label{reduced}
0=-\psi(\theta)+Q(\psi(\theta)),\quad \dot{x}=P(\psi(\theta))+x.\end{equation}
Then the reduced flow on $\theta=\theta_i$ follows the positive direction of $x$-axis if $x>-P(\psi(\theta_i))$ and 
follows the negative direction of $x$-axis if $x<-P(\psi(\theta_i))$. \end{proof}

\begin{corollary} In the conditions of the Proposition \ref{teo-saddlens}, 
let $q$ be a  normally hyperbolic singular point of the reduced problem \eqref{reduced} with $r=0$.
Then there exists $\e_0>0$ such that for $0<\e<\e_0$, $X_\e$ has a saddle point or a repelling node point near $q$. 
\end{corollary}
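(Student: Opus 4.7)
The plan is to work directly with the smooth vector field $X^\e$ in the original coordinates $(x,y)$ rather than with the blown-up system. Writing the regularization out,
\begin{equation*}
X^{\e}:\quad \dot x = P(\varphi(y/\e)) + x,\qquad \dot y = -\varphi(y/\e) + Q(\varphi(y/\e)) - y,
\end{equation*}
and letting $q=(\theta_0,x_0)$ with $\la_0:=\psi(\theta_0)$, the reduced-singular-point hypotheses translate into $-\la_0+Q(\la_0)=0$, $x_0=-P(\la_0)$, and, using $\psi'(\theta_0)\neq 0$, the normal hyperbolicity condition $Q'(\la_0)-1\neq 0$.

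First I would locate an equilibrium $p_\e=(x_\e,y_\e)$ of $X^\e$ close to $(x_0,0)$. The equation $\dot x=0$ simply reads $x=-P(\varphi(y/\e))$, so the real work is solving $\dot y=0$. A naive implicit function argument in $y$ fails because $\partial_y\dot y$ is of order $1/\e$ and blows up as $\e\to 0^+$. I would therefore pass to the fast variable $s:=y/\e$, in which $\dot y=0$ becomes
\begin{equation*}
\e\, s = -\varphi(s)+Q(\varphi(s))=:\Psi(s).
\end{equation*}
At $s_0:=\varphi^{-1}(\la_0)=\cot\theta_0$, which is finite since $\theta_0\in(0,\pi)$, one has $\Psi(s_0)=0$ and $\Psi'(s_0)=\varphi'(s_0)\bigl(Q'(\la_0)-1\bigr)\neq 0$. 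The implicit function theorem applied to $F(s,\e):=\Psi(s)-\e s$ at $(s_0,0)$ then yields a smooth branch $s_\e\to s_0$, hence $y_\e=\e s_\e\to 0$ and $x_\e=-P(\varphi(s_\e))\to x_0$.

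Next I would compute the Jacobian of $X^\e$ at $p_\e$. Since $\dot y$ does not depend on $x$, the matrix is upper triangular and its eigenvalues are the diagonal entries,
\begin{equation*}
\mu_1^\e=1,\qquad \mu_2^\e=\frac{\varphi'(s_\e)\bigl(Q'(\varphi(s_\e))-1\bigr)}{\e}-1.
\end{equation*}
As $\e\to 0^+$, $\varphi'(s_\e)\to\varphi'(s_0)>0$ and $Q'(\varphi(s_\e))-1\to Q'(\la_0)-1\neq 0$, so $|\mu_2^\e|\to\infty$ with $\sgn(\mu_2^\e)=\sgn(Q'(\la_0)-1)$. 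Because $\mu_1^\e=1>0$ always, for all sufficiently small $\e>0$ the two eigenvalues are either of opposite sign (giving a saddle, when $Q'(\la_0)<1$) or both strictly positive with $\mu_2^\e\gg\mu_1^\e$ (giving a repelling node, when $Q'(\la_0)>1$), which is exactly the dichotomy of the statement; this yields the required $\e_0>0$.

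The main obstacle is the first step: the presence of $\e$ in the denominator blocks a direct implicit-function argument in $y$, and the rescaling $s=y/\e$ — which is precisely the polar blow-up of Theorem \ref{mainteo} read in the directional chart $\overline{\e}=\e$ — is what makes the equilibrium trackable as $\e\to 0^+$. Once $p_\e$ is in hand, the upper-triangular structure of the Jacobian makes the eigenvalue analysis immediate.
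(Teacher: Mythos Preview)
Your argument is correct and self-contained. The paper proceeds differently: instead of computing with $X^\e$ in the original $(x,y)$-coordinates, it stays in the blown-up $(\theta,x)$-variables and invokes Lemma~14 of \cite{BPT}, a Fenichel-type persistence result stating that a normally hyperbolic equilibrium of the reduced problem continues to an equilibrium $q_\e$ of the full system whose stable and unstable dimensions are the sums $j^s+k^s$ and $j^u+k^u$ of the slow and fast contributions. Here the reduced equation $\dot x=P(\psi(\theta_i))+x$ forces $j^u=1$, $j^s=0$, and normal hyperbolicity gives $k^s+k^u=1$, yielding exactly the saddle/repelling-node dichotomy. Your route trades this black-box citation for an explicit implicit-function-theorem step in the fast variable $s=y/\e$ and an upper-triangular Jacobian computation; this is more elementary and even identifies the fast eigenvalue asymptotics $\mu_2^\e\sim \e^{-1}\varphi'(s_0)\bigl(Q'(\la_0)-1\bigr)$, making transparent which sign of $Q'(\la_0)-1$ produces which phase portrait. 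The paper's approach, on the other hand, transports verbatim to higher-dimensional normal forms where no triangular structure is available.
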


\begin{proof} We observe that any point $q$ on the 
slow manifold $\mathcal{S}$ is normally hyperbolic if
\[(\partial\alpha/\partial \theta)(q,0)=\dfrac{\partial}{\partial \theta}\left(-\sin\theta(-\psi(\theta)+Q(\psi(\theta)))\right)\neq0.\]
 Let us assume that, for every normally hyperbolic $q\in\mathcal{S}$, $(\partial\alpha/\partial \theta)(q,0)$ 
has $k^s$ eigenvalues with negative real part and $k^u$ eigenvalues with positive real part for the fast system. 
Lemma 14 in \cite{BPT} implies that $X_\e$ has a singular point $q_\e$ with approaches $q$ when
 $\e$ is near to zero and $q_\e$ has a $(j^s+k^s)$-dimensional local stable manifold $W_\e^s$ and $(j^u+k^u)$-dimensional 
 local unstable manifold $W_\e^u$, where $j^s$ and $j^u$ are the dimensions of the local stable and unstable 
 manifold, respectively. In this case, $j^u=1$ and $j^s=0$. 
 If $k^u=0$ and $k^s=1$, then $q_\e$ is a saddle or if $k^u=1$ and $k^s=0$, then $q_\e$ is a repelling node.
\end{proof}

\noindent \textbf{Example 4.} Consider the PSVF $X_N=(X_+,X_-)$ with $X_+=(x,-1-y)$, $X_-=(x,1-y)$ and the continuous combination 
\[\widetilde{X}=( -1+ {\lambda}^2 +x, -1-\lambda+{\lambda}^2-y).\]
The corresponding  slow--fast system is  
\[r\dot{\theta}=-\sin\theta(-1-\lambda+{\lambda}^2-r\cos\theta),\quad \dot{x}=-1+ {\lambda}^2 +x,\]
with $\lambda=\varphi(\cot\theta).$  Since ${\lambda}^2-\lambda-1=0$ has two solutions 
$\lambda_0\in (-1,0)$ and  $\lambda_1>1$ the slow manifold has only one
component $\theta=\theta_0\in(\pi/2,\pi).$ \
The slow flow $\dot{x}=x+{\lambda_0}^2-1$ has one repelling equilibrium point $x_0\approx0.6...$
See figure \eqref{figFNsewing}.

\begin{figure}
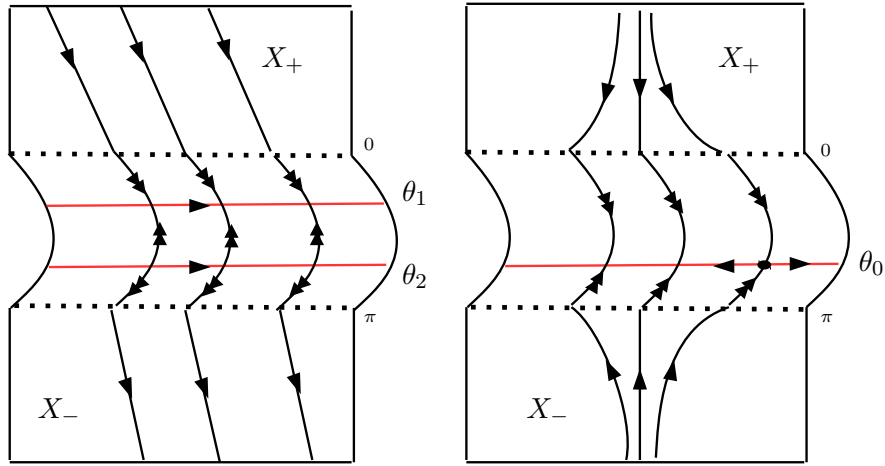

\begin{center}
\begin{overpic}[width=6cm]{DIP-fn1.pdf}
\put(61,35){\tiny$\pi$}\put(61,62){\tiny$0$}\put(10,20){$X_-$}\put(45,75){$X_+$}\put(67,41){$\theta_2$}\put(67,54){$\theta_1$}
\end{overpic}
\begin{overpic}[width=6cm]{DIP-fn2.pdf}
\put(61,35){\tiny$\pi$}\put(61,61){\tiny$0$}\put(15,20){$X_-$}\put(45,75){$X_+$}\put(67,43){$\theta_0$}
\end{overpic}
\end{center}
\caption{Slow-fast system obtained blowing up the nonlinear regularization of 
the normal forms of sewing and saddle PSVF}\label{figFNsewing}
\end{figure}

\subsection{Fold}

We say that $X_N=(X_+,X_-)$  is the normal form of the fold PSVF defined on $\R^2$ if  $X_+(x,y)=(1,x)$, $X_-(x,y)=(1,1),$
$h(x,y)=y$ and  $\Sigma=h^{-1}(0)$. In this case $\Sigma^{s}=\{(x,0)\in\Sigma:x< 0\}$.

\begin{proposition}\label{teo-foldns} Let $X_N=(X_1,\,X_2)$ be the normal form of the fold PSVF defined on $\R^2$ 
and the continuous combination
\[\widetilde{X}=(1+P(\la),1/2-\lambda/2+Q(\la)+(1/2+\la/2)x ).\] 
\begin{itemize}
\item [(a)] The singular perturbation problem \eqref{eqteo} in the Theorem \ref{mainteo} 
for this case is such that the slow manifold is a graphic $x=x(\theta)$ which is zero for $\theta=0$ and goes to $-\infty$ when $\theta$ goes to $\pi$.
\item [(b)] The slow flow is determined by $\dot{x}=1+P(\psi(\theta)).$
\end{itemize}
\end{proposition}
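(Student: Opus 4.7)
My plan is to follow the same blow-up strategy used in the proof of Theorem \ref{mainteo} and in the preceding propositions for the sewing and saddle normal forms, with the only new feature being that the coefficient of $x$ in $s_2$ now depends on $\lambda$ and vanishes at $\lambda=-1$. Writing $\widetilde{X}=(s_1,s_2)$, I identify
\[
s_1(\lambda,x,y)=1+P(\lambda),\qquad s_2(\lambda,x,y)=\tfrac{1-\lambda}{2}+Q(\lambda)+\tfrac{1+\lambda}{2}x,
\]
and introduce the polar blow-up $y=r\cos\theta$, $\e=r\sin\theta$, $\lambda=\psi(\theta)=\varphi(\cot\theta)$, so that the $\varphi$-nonlinear regularization becomes the slow--fast system
\[
r\dot\theta=-\sin\theta\,\Bigl(\tfrac{1-\psi(\theta)}{2}+Q(\psi(\theta))+\tfrac{1+\psi(\theta)}{2}x\Bigr),\qquad \dot x=1+P(\psi(\theta)).
\]
This is already the singular perturbation problem promised by Theorem \ref{mainteo}, so statement (b) is immediate once the slow manifold is shown to be a graph.

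For (a) I would solve $\alpha_0(0,\theta,x)=0$ explicitly for $x$. Since $\tfrac{1+\psi(\theta)}{2}$ only vanishes at $\theta=\pi$, for $\theta\in[0,\pi)$ the slow manifold is the graph
\[
x(\theta)=\frac{\psi(\theta)-1-2Q(\psi(\theta))}{1+\psi(\theta)}.
\]
At $\theta=0$ I have $\psi(0)=1$ and, because the continuous combination \eqref{contcomb} is required to satisfy $Q(\pm 1)=0$, the numerator vanishes while the denominator equals $2$; hence $x(0)=0$. As $\theta\to\pi^-$, $\psi(\theta)\to-1$, so $Q(\psi(\theta))\to Q(-1)=0$ and the numerator tends to $-2$ while the denominator tends to $0^+$, producing $x(\theta)\to-\infty$. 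Smoothness of $\psi$ and $Q$ on $(0,\pi)$ gives smoothness of $x(\theta)$, completing (a). Statement (b) then follows because the reduced equation inherited from the slow system is exactly $\dot x=1+P(\psi(\theta))$.

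I do not expect a genuine obstacle here: the argument is essentially a book-keeping exercise once one recognizes that the two boundary conditions $Q(\pm 1)=0$ built into the admissible continuous combinations are precisely what make the numerator of $x(\theta)$ vanish at $\theta=0$ and stay finite at $\theta=\pi$. The only subtle point worth being careful about is checking that $1+\psi(\theta)>0$ on the whole open interval $(0,\pi)$, which is guaranteed by the strict monotonicity of $\psi$ together with $\psi(0)=1$, $\psi(\pi)=-1$; this ensures $x(\theta)$ is genuinely a function of $\theta$ and that the blow-down projection $\mathcal{S}\to\Sigma$ from Theorem \ref{mainteo}(a) is well defined on all of $\mathcal{S}\cap\{\theta\in(0,\pi)\}$.
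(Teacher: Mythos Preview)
Your proposal is correct and follows essentially the same blow-up route as the paper: both derive the slow--fast system
\[
r\dot\theta=-\sin\theta\Bigl(\tfrac{1-\psi(\theta)}{2}+Q(\psi(\theta))+\tfrac{1+\psi(\theta)}{2}x\Bigr),\qquad \dot x=1+P(\psi(\theta)),
\]
and read off (b) directly. Your treatment of (a) is in fact more complete than the paper's, which simply asserts the boundary behaviour of the slow manifold without writing down the explicit formula $x(\theta)=(\psi(\theta)-1-2Q(\psi(\theta)))/(1+\psi(\theta))$ or invoking $Q(\pm1)=0$; your added remark that $1+\psi(\theta)>0$ on $(0,\pi)$ is exactly the check needed to justify that the slow manifold is a graph over $\theta$.
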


\begin{proof} Let $\widetilde{X}$ be the continuous combination of $X$ given by the statement of the proposition.
Then $p=(x,0)\in\Sigma^{sl}_n$ if there exists $\la\in[-1,1]$ such that $(\widetilde{X}.h)(p)=0$, so solving this we obtain 
\begin{equation}\label{eq-nsfold}
1/2-\lambda/2+Q(\la)+(1/2+\la/2)x=0.
\end{equation}  

The singular perturbation problem \eqref{eqteo} in the Theorem \ref{mainteo} for this case is 
\begin{eqnarray}\label{reduced-fold}
r\dot{\theta}&=&-\sin\theta\left(1/2-\psi(\theta)/2+Q(\psi(\theta))+(1/2+\psi(\theta)/2)x\right),\nonumber\\
\dot{x}&=&1+P(\psi(\theta))\nonumber.
\end{eqnarray}
The slow manifold is the graphic of a  function which is 0 for $\theta=0$ and goes to $-\infty$ when $\theta$ 
 goes to $\pi$. The reduced flow is determined by  $\dot{x}>0$. The fast flow satisfies  that  that $\dot{\theta}>0$, for 
 $0<\theta<M(x)$ and $\dot{\theta}<0$, for $M(x)<\theta<\pi$, with $M(x)$ given 
 implicitly by $1/2-\psi(\theta)/2+Q(\psi(\theta))+(1/2+\psi(\theta)/2)x=0$.  
\end{proof}

\noindent \textbf{Example 5.} Consider the PSVF $X_N=(X_+,X_-)$ with $X_+=(1,x)$, $X_-=(1,1)$ and the continuous combination 
\[\widetilde{X}=({\lambda}^2,  -1/2   -\la/2+{\lambda}^2+(\la/2+1/2)x).\]
The corresponding  slow--fast system is  
\[r\dot{\theta}=-\sin\theta( -1/2   -\la/2+{\lambda}^2+(\la/2+1/2)x),\quad \dot{x}={\lambda}^2,\]
with $\lambda=\varphi(\cot\theta).$  
The equation $-1/2   -\la/2+{\lambda}^2+(\la/2+1/2)x=0$ defines the slow manifold implicitly 
$x=x(\theta)$ as a smooth function satisfying that $x(0)=0$ and $\lim_{x\rightarrow\pi}x(\theta)=-\infty.$\
The slow flow is locally equivalent to $\dot{x}=1.$ Moreover there exists one non normally hyperbolic point on the slow manifold.
See figure \eqref{figFNfold}.

\section{Nonlinear regularization of Codimension One PSVF on $\R^2$}\label{s5}

The codimension one singularities of the PSVF are those which generically occur in one parameter families of PSVF. The classification of codimension one local bifurcations was achieved by \cite{GST, KRG, T}.  In what follows we discuss about the  normal forms $X_N$ of codimension one PSVF defined on $\R^2$. 
Saddle-node, elliptical fold, hyperbolic fold and parabolic fold are considered.  
For each one of these normal forms we consider a continuous combination 
$\widetilde{X}$  given by \eqref{contcomb} and analyze the dynamics of their 
nonlinear regularizations using singular perturbation techniques.

\begin{figure}[h!]
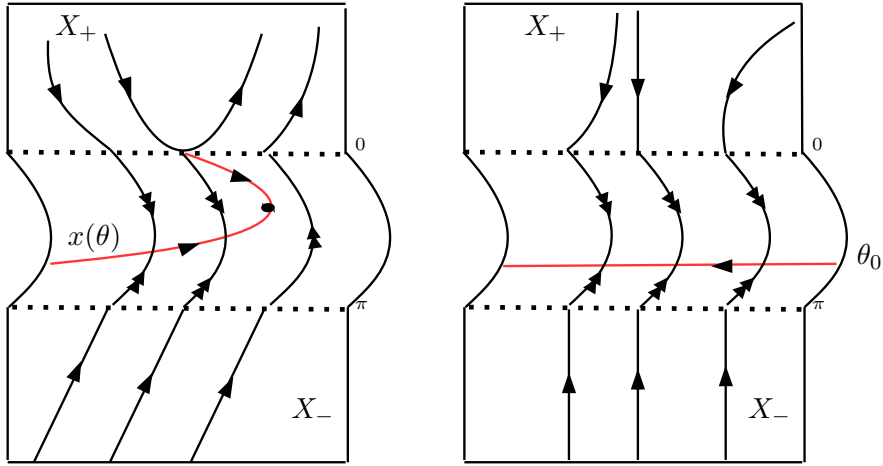

\begin{center}
\begin{overpic}[width=6cm]{DIP-fn3.pdf}
\put(60,37){\tiny$\pi$}\put(60,62){\tiny$0$}\put(50,20){$X_-$}\put(13,80){$X_+$}\put(15,47){$x(\theta)$}
\end{overpic}
\begin{overpic}[width=6cm]{DIP-fn4.pdf}
\put(60,37){\tiny$\pi$}\put(60,62){\tiny$0$}\put(50,20){$X_-$}\put(15,80){$X_+$}\put(67,44){$\theta_0$}
\end{overpic}
\end{center}
\caption{Slow-fast system obtained blowing up the nonlinear regularization of 
the normal forms of fold and saddle-node PSVF.}\label{figFNfold}
\end{figure}

\subsection{Saddle-node}

We say that $X_N=(X_1,X_2)$  is the normal form of the saddle-node PSVF defined on $\R^2$ if  $X_+(x,y)=(-x^2,-1)$, $X_-(x,y)=(0,1)$,
$h(x,y)=y$ and $\Sigma=h^{-1}(0)$. In this case $\Sigma^s=\Sigma$.

\begin{proposition}\label{nssaddle-node} Let $X_N=(X_+,\,X_-)$ be the normal form of
the saddle-node PSVF and the continuous combination 
  \[\widetilde{X}=(  P(\la)+(-1/2-\la/2)x^2,-\la+Q(\la)).\]
\begin{itemize}
\item [(a)] There exists a real continuous function $\Delta=\Delta(\la)$ such
 that if $\Delta$ has $k\leq m$ real roots $\la_{i}\in[-1,1]$, $i=1,\ldots,\,k$ then $\Sigma=\Sigma^{sl}_n$. 
\item [(b)] The singular perturbation problem \eqref{eqteo} in the Theorem \ref{mainteo}
 is such that the slow manifold is the union of $k$ lines $\theta=\theta_i,\,i=1,\ldots,k\leq m$. 
 Moreover the slow flow on each $\theta=\theta_i$ is determined by \linebreak $\dot{x}=\sgn\left(P(\psi(\theta))-(x^2/2)(1+\psi(\theta))\right)$.
\item[(c)] For each $i=1,\ldots,k\leq m$, the number of singular points of the slow flow on $\theta=\theta_i$ is at most two whenever $P(\psi(\theta_i))\geq0$.
\end{itemize}
\end{proposition}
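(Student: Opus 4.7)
My plan is to apply Theorem \ref{mainteo} directly to the saddle-node normal form and to imitate the three-step template used in the preceding propositions (sewing, saddle, fold); the three items (a)--(c) then drop out as elementary consequences of the resulting slow-fast system.

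First I would write out $\widetilde{X}$ explicitly by substituting $X_{\pm}$ into \eqref{contcomb}; this yields the stated $\widetilde{X}=(P(\la)-\tfrac{1+\la}{2}x^2,\,-\la+Q(\la))$. Since $h=y$, one has $(\widetilde{X}\cdot h)(p)=-\la+Q(\la)$, and I would set $\Delta(\la):=-\la+Q(\la)$. The crucial observation for (a) is that $\Delta$ is independent of $x$: hence the membership condition ``$\exists\,\la\in[-1,1]$ with $\Delta(\la)=0$'' either fails everywhere on $\Sigma$ or holds everywhere, so $\Sigma^{sl}_n\in\{\emptyset,\Sigma\}$; when the number $k$ of such roots is positive we obtain $\Sigma=\Sigma^{sl}_n$, with $k$ bounded by the degree of $\Delta$.

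For (b) I would follow the proof of Theorem \ref{mainteo} and perform the polar blow-up $y=r\cos\theta$, $\e=r\sin\theta$, $\psi(\theta)=\varphi(\cot\theta)$, turning the regularization into the slow-fast system
\begin{equation*}
r\dot\theta=-\sin\theta\,\Delta(\psi(\theta)),\qquad \dot x = P(\psi(\theta))-\tfrac{1}{2}\bigl(1+\psi(\theta)\bigr)\,x^2.
\end{equation*}
The slow manifold in $(0,\pi)\times\R$ is cut out by $\Delta(\psi(\theta))=0$; because $\psi$ is a strictly decreasing bijection from $(0,\pi)$ onto $(-1,1)$, each root $\la_i$ of $\Delta$ in $(-1,1)$ pulls back to a unique $\theta_i=\psi^{-1}(\la_i)$, so $\mathcal{S}$ is exactly the union of the vertical lines $\theta=\theta_i$. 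Restricting the reduced system to such a line gives $\dot x=P(\psi(\theta_i))-\tfrac{1}{2}(1+\psi(\theta_i))x^2$, whose direction of motion is the $\sgn$ appearing in (b).

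For (c), fixing $\theta_i$ and writing $\la_i=\psi(\theta_i)\in(-1,1)$, the singular points of the slow flow on $\theta=\theta_i$ solve $P(\la_i)-\tfrac{1}{2}(1+\la_i)x^2=0$; since $1+\la_i>0$ this reduces to $x^2=2P(\la_i)/(1+\la_i)$, which has at most two real solutions when $P(\la_i)\geq 0$ (and none otherwise). I do not foresee a genuine obstacle: the argument is a specialisation of Theorem \ref{mainteo}, and the only point worth flagging is that, in contrast to the fold case, the $x$-independence of $\Delta$ forces $\mathcal{S}$ to be a disjoint union of vertical lines rather than a graph, which is precisely what produces the dichotomy in (a) and the purely quadratic equation governing (c).
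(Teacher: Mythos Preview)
Your proposal is correct and follows essentially the same route as the paper: for (a) and (b) the paper simply writes that the arguments are identical to those of the earlier propositions (sewing, saddle), and for (c) it records the explicit singular points $x=\pm\sqrt{2P(\psi(\theta_i))/(1+\psi(\theta_i))}$, which is exactly your quadratic computation. The only small remark is that, as in the saddle case, one actually has $\Delta(1)=-1$ and $\Delta(-1)=1$, so the intermediate value theorem already forces $k\ge 1$ and hence $\Sigma=\Sigma^{sl}_n$ unconditionally; your dichotomy argument is fine but slightly understates what holds here.
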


\begin{proof} The items (a) and (b) follow using the same ideas as in the proofs of the previous theorems. 
For the item (c), we consider the singular perturbation problem \eqref{eqteo}
\begin{equation}\label{reduced-saddle-no}
r\dot{\theta}=-\sin\theta\left(-\psi(\theta)+Q(\psi(\theta))\right),\quad \dot{x}=P(\psi(\theta))+(-1/2-\psi(\theta)/2)x^2.
\end{equation}
The there are two singular points at $(\theta,x)=\left(\theta_i,\pm\sqrt{2P(\psi(\theta_i))/(1+\psi(\theta_i))}\right)$, if  $P(\psi(\theta_i))\geq 0$, for each $i=1,\ldots,k$.
\end{proof}

\noindent \textbf{Example 6.} Consider the PSVF $X_N=(X_+,X_-)$ with $X_+=(-x^2,-1)$, $X_-=(0,1)$ and the continuous combination 
\[\widetilde{X}=(-1 +{\lambda}^2+(-1/2-\la/2)x^2, -1-\lambda+ {\lambda}^2).\]
The corresponding  slow--fast system is  
\[r\dot{\theta}=-\sin\theta( -1-\lambda+ {\lambda}^2),\quad \dot{x}=-1 +{\lambda}^2+(-1/2-\la/2)x^2,\]
with $\lambda=\varphi(\cot\theta).$  The equation $-1-\lambda+ {\lambda}^2=0$ has only one solution $\lambda_0\in(-1,0)$ and consequently the
 slow manifold is $\theta=\theta_0$ with $\theta_0\in(\pi/2,\pi).$
Since $-1 +{\lambda_0}^2+(-1/2-\la_0/2)x^2<0$ the slow flow is locally equivalent to $\dot{x}=-1.$
See figure \eqref{figFNfold}.

\subsection{Elliptical fold}
We say that $X_N=(X_+,X_-)$  is the normal form of the elliptical fold PSVF defined on $\R^2$ if  $X_+(x,y)=(1,-x)$, $X_-(x,y)=( -1,-x)$,
$h(x,y)=y$ and $\Sigma=h^{-1}(0)$.  

\begin{proposition}Let $X_N=(X_+\,X_-)$ be the normal form of the elliptical fold PSVF and  the continuous combination
 \[\widetilde{X}=(\la+ P(\la),Q(\la)-x) .\] 
 
\begin{itemize}
\item[(a)] The singular perturbation problem \eqref{eqteo} in the Theorem \ref{mainteo}, for this case, is such that the slow manifold is a graphic $x=Q(\psi(\theta)$ which joins $(0,0)$ with $(\pi,0)$. Moreover, it is possible that 
the curve $x=Q(\psi(\theta)$ has points $(\theta_0,0)$ such that $Q(\psi(\theta_0)=0$, $\theta_0\in(0,\pi)$.

\item[(b)] The reduced flow is determined by $\dot{x}=\sgn(\psi(\theta_0)+P(\psi(\theta_0))$, $\theta_0\in(0,\pi)$.
\end{itemize}
\end{proposition}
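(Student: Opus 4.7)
The plan is to follow the blueprint established in the previous propositions (sewing, saddle, fold, saddle-node): substitute the normal form into the continuous combination \eqref{contcomb}, perform the polar blow-up from the proof of Theorem \ref{mainteo}, and read off the slow manifold and reduced flow.

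First, I would compute $\widetilde{X}$ explicitly. With $X_+(x,y)=(1,-x)$ and $X_-(x,y)=(-1,-x)$, the linear convex part yields $\tfrac{1+\la}{2}X_+ + \tfrac{1-\la}{2}X_- = (\la,-x)$, so after adding $(P(\la),Q(\la))$ one obtains $\widetilde{X}=(\la+P(\la),\,Q(\la)-x)$, matching the statement. Since $h(x,y)=y$, the transversality function is $(\widetilde{X}\cdot h)(x,0)=Q(\la)-x$. Then apply the polar blow-up $y=r\cos\theta$, $\eps=r\sin\theta$ with $\psi(\theta)=\varphi(\cot\theta)$ to obtain the slow--fast system
\begin{equation*}
r\dot{\theta}=-\sin\theta\bigl(Q(\psi(\theta))-x\bigr),\qquad \dot{x}=\psi(\theta)+P(\psi(\theta)),
\end{equation*}
which fits the template $r\dot\theta=-\sin\theta\,\al_0(r,\theta,x)$, $\dot x=\be(r,\theta,x)$ of Theorem \ref{mainteo}.

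For item (a), the slow manifold $\mathcal{S}=\{\al_0(0,\theta,x)=0\}$ is exactly the graph $x=Q(\psi(\theta))$ for $\theta\in(0,\pi)$. To identify the endpoints I would use the monotonicity of $\psi$ together with the limits $\psi(0)=\lim_{\theta\to 0^+}\varphi(\cot\theta)=+1$ and $\psi(\pi)=\lim_{\theta\to\pi^-}\varphi(\cot\theta)=-1$. Combined with the standing hypothesis $Q(\pm 1)=0$, this forces $x\to 0$ as $\theta\to 0^+$ and as $\theta\to\pi^-$, so $\mathcal{S}$ joins $(0,0)$ to $(\pi,0)$. Any additional real root $\la_0\in(-1,1)$ of $Q$ provides an interior crossing: by monotonicity of $\psi$ there is a unique $\theta_0\in(0,\pi)$ with $\psi(\theta_0)=\la_0$, and at that point $Q(\psi(\theta_0))=0$, producing the internal intersections $(\theta_0,0)$ mentioned in the statement.

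For item (b), setting $r=0$ in the slow--fast system yields the reduced equation $\dot{x}=\psi(\theta)+P(\psi(\theta))$ restricted to $\mathcal{S}$. The direction of the flow is then determined by the sign of the right-hand side, giving exactly $\dot{x}=\sgn\bigl(\psi(\theta_0)+P(\psi(\theta_0))\bigr)$ for the point of $\mathcal{S}$ over $\theta_0\in(0,\pi)$, as claimed. The only subtlety I foresee is the endpoint analysis in (a): one must remember that the blow-up formally extends $\psi$ continuously to the corner values $\pm 1$ at $\theta=0,\pi$, and that the constraint $Q(\pm 1)=0$ is precisely what guarantees $\mathcal S$ is a closed arc between $(0,0)$ and $(\pi,0)$ rather than an open one. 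Everything else reduces to substitution and the same sign analysis used in Propositions for the sewing, saddle, fold, and saddle--node normal forms.
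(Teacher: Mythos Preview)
Your proposal is correct and follows precisely the template the paper uses for the analogous propositions (sewing, saddle, fold, saddle--node): compute $\widetilde{X}$ from \eqref{contcomb}, apply the polar blow-up of Theorem~\ref{mainteo}, and read off $\mathcal{S}$ and the reduced flow. In fact the paper states this proposition without an accompanying proof, so your argument is exactly the one the authors implicitly rely on, including the use of $Q(\pm1)=0$ to pin the endpoints of the graph at $(0,0)$ and $(\pi,0)$.
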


\noindent \textbf{Example 7.} Consider the PSVF $X_N=(X_+,X_-)$ with $X_+=(1,-x)$, $X_-=(-1,-x)$ and the continuous combination 
\[\widetilde{X}=(-1+\lambda +{\lambda}^2, -1-x+{\lambda}^2).\]
The corresponding  slow--fast system is  
\[r\dot{\theta}=-\sin\theta( -1-x+{\lambda}^2),\quad \dot{x}=-1+\lambda +{\lambda}^2,\]
with $\lambda=\varphi(\cot\theta).$  The equation $-1-x+{\lambda}^2=0$ defines implicitly the
slow manifold as a smooth curve connecting $\theta=0$ and $\theta=\pi$.
The slow flow is determined by $ \dot{x}=-1+\lambda +{\lambda}^2$. Thus $\dot{x}>0$ if $\lambda\in(\lambda_0,1)$ and 
$\dot{x}<0$ if $\lambda\in(-1,\lambda_0)$, where $\lambda_0=\varphi(\cot\theta_0), \theta_0\in(0,\pi/2).$ 
See figure \eqref{figFNel}.

\begin{figure}[h!]
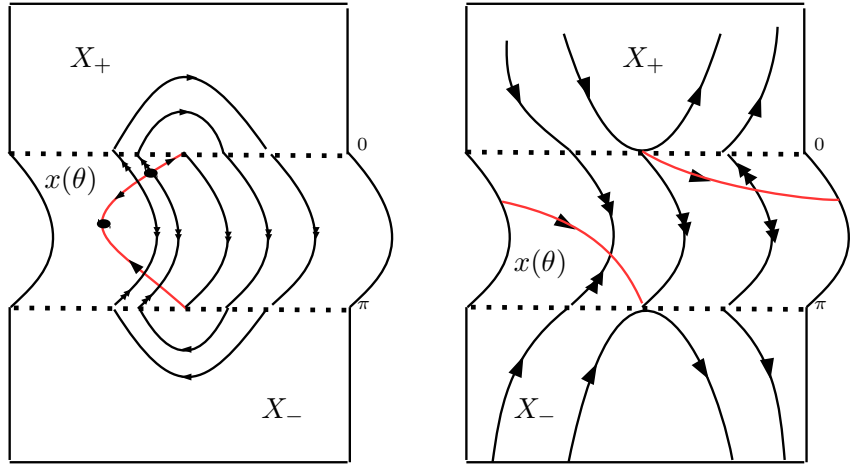

\begin{center}
\begin{overpic}[width=6cm]{DIP-fn5.pdf}
\put(60,37){\tiny$\pi$}\put(60,62){\tiny$0$}\put(45,20){$X_-$}\put(15,75){$X_+$}\put(11,56){$x(\theta)$}
\end{overpic}
\begin{overpic}[width=6cm]{DIP-fn6.pdf}
\put(60,37){\tiny$\pi$}\put(60,62){\tiny$0$}\put(13,20){$X_-$}\put(30,75){$X_+$}\put(13,43){$x(\theta)$}
\end{overpic}
\caption{Slow-fast system obtained blowing up the nonlinear regularization of 
the normal forms of elliptical fold and hyperbolic fold PSVF.}\label{figFNel}
\end{center}
\end{figure}

\subsection{Hyperbolic fold}

We say that $X_N=(X_+,X_-)$  is the normal form of the hyperbolic fold PSVF defined on $\R^2$ if 
$X_+(x,y)=(1,x)$, $X_-(x,y)=(1,-2x)$,  $h(x,y)=y$ 
and $\Sigma=h^{-1}(0)$.

\begin{proposition} \label{teo-hyperbolic-2} Let $X_N^{-}$ be the normal form of the hyperbolic fold and the continuous combination
 \[\widetilde{X}=(1+P(\la),   Q(\la)+(-1/2+3\la/2)x).\] 
 The singular perturbation 
problem \eqref{eqteo} in the Theorem \ref{mainteo} is such that the reduced flow has only two 
singular points, $(0,0)$ and $(\pi,0)$, and the slow manifold is the curve $\frac{x}{2}(1-3\psi(\theta))=Q(\psi(\theta))$
that joins the two folds and tends to $\pm\infty$ if $\theta$ tens to $\theta_0$, where $\psi(\theta_0)=1/3$. 
Moreover, the reduced flow is singular.
\end{proposition}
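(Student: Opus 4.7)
The plan is to apply Theorem \ref{mainteo} to the hyperbolic fold normal form, mirroring the structure of the preceding propositions (sewing, saddle, fold, saddle-node, elliptical fold). First I would compute the continuous combination explicitly: plugging $X_+(x,y) = (1,x)$ and $X_-(x,y) = (1,-2x)$ into \eqref{contcomb} yields $\widetilde X(\lambda,x,y) = (1 + P(\lambda),\, Q(\lambda) + \tfrac{3\lambda - 1}{2}x)$, matching the formula in the statement. Feeding this into the polar blow-up of the proof of Theorem \ref{mainteo} (with $h(x,y) = y$, $y = r\cos\theta$, $\varepsilon = r\sin\theta$) gives the slow-fast system
\[ r\dot\theta = -\sin\theta\bigl[Q(\psi(\theta)) + \tfrac{3\psi(\theta) - 1}{2}\,x\bigr],\qquad \dot x = 1 + P(\psi(\theta)), \]
so that $\alpha_0(r,\theta,x) = Q(\psi(\theta)) + \tfrac{3\psi(\theta) - 1}{2}x$ and $\beta = 1 + P(\psi(\theta))$.

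Next I would read off the slow manifold $\mathcal{S} = \{\alpha_0(0,\theta,x) = 0\}$ as the curve $\tfrac{x}{2}(1 - 3\psi(\theta)) = Q(\psi(\theta))$. Since $\psi$ is a strictly decreasing bijection from $(0,\pi)$ onto $(-1,1)$ with $\psi(0) = 1$, $\psi(\pi) = -1$, and since the hypothesis $Q(\pm 1) = 0$ forces $x = 0$ at both $\theta = 0$ and $\theta = \pi$, the slow manifold passes through $(0,0)$ and $(\pi,0)$; these two points are precisely the common fold tangencies of $X_+$ and $X_-$ at $x=0$, so $\mathcal{S}$ joins the two folds. The coefficient $1 - 3\psi(\theta)$ vanishes at the unique $\theta_0 \in (0,\pi)$ determined by $\psi(\theta_0) = 1/3$, and solving for $x$ gives $x(\theta) = 2Q(\psi(\theta))/(1 - 3\psi(\theta))$, which tends to $\pm\infty$ as $\theta \to \theta_0^{\pm}$ whenever $Q(1/3) \neq 0$.

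For the last two claims I would test normal hyperbolicity along $\mathcal{S}$. A direct differentiation gives
\[ \partial_\theta \alpha_0(0,\theta,x) = \psi'(\theta)\bigl(Q'(\psi(\theta)) + \tfrac{3}{2}x\bigr), \]
which at the endpoints $\theta = 0$ and $\theta = \pi$ vanishes (for a standard strictly increasing transition function such as $\tanh$ one has $\psi'(0) = \psi'(\pi) = 0$). Hence $(0,0)$ and $(\pi,0)$ are the two inevitable non-normally hyperbolic points of $\mathcal{S}$, i.e.\ the singular points of the reduced problem in the sense of Theorem \ref{mainteo}. The reduced flow $\dot x = 1 + P(\psi(\theta))$ on $\mathcal{S}$ is then ``singular'' in exactly this way: the dynamics is confined to a slow manifold that carries these two boundary non-hyperbolicities together with an unbounded branch escaping to $\pm\infty$ across the vertical asymptote at $\theta_0$.

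The main obstacle I expect is interpretive rather than computational: one has to settle on the precise meaning of ``singular points of the reduced flow'' and ``the reduced flow is singular''. The natural convention, and the one I would adopt, is that a singular point is a non-normally hyperbolic point of $\mathcal{S}$ in the sense of Theorem \ref{mainteo}, and that singularity of the reduced flow refers jointly to these two endpoint non-hyperbolicities and to the unbounded branch through $\theta_0$. Once this is fixed, all three stated properties follow immediately from the explicit slow-manifold equation, with no further singular-perturbation machinery needed.
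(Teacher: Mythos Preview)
Your approach is correct and follows exactly the template the paper uses for the earlier normal forms (sewing, saddle, fold, saddle-node): write out the continuous combination \eqref{contcomb}, apply the polar blow-up from the proof of Theorem \ref{mainteo}, and read off $\alpha_0$, $\beta$, the slow manifold, and the reduced flow. The paper states this proposition without proof, and your computations agree with the slow--fast system displayed in Example 8 immediately afterward, so there is nothing to compare beyond confirming that you have reproduced the intended argument.

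The interpretive caveat you flag is genuine and worth keeping: at $(0,0)$ and $(\pi,0)$ one has $\dot x = 1 + P(\pm 1) = 1 \neq 0$, so these are not equilibria of the $x$-equation. Your reading---that ``singular points'' here means the non-normally hyperbolic endpoints where the slow manifold meets the boundary lines $\theta\in\{0,\pi\}$, i.e.\ the two fold tangencies---is the only one consistent with the computation, and it matches how the paper treats the parabolic fold in Proposition \ref{teo-parabolic}. One small correction: you should not claim these are the \emph{only} non-normally hyperbolic points, since $\partial_\theta\alpha_0 = \psi'(\theta)\bigl(Q'(\psi(\theta)) + \tfrac{3}{2}x\bigr)$ can also vanish in the interior for suitable $Q$; the proposition's ``only two'' is best understood as referring to the two fold points that are forced by the structure of the normal form, independent of $P,Q$.
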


\noindent \textbf{Example 8.} Consider the PSVF $X_N=(X_+,X_-)$ with $X_+=(1,x)$, $X_-=(1,-2x)$ and the continuous combination 
\[\widetilde{X}=({\lambda}^2, -1+{\lambda}^2+ (-1/2+3\la/2)x).\]
The corresponding  slow--fast system is  
\[r\dot{\theta}=-\sin\theta( -1+{\lambda}^2+ (-1/2+3\la/2)x),\quad \dot{x}={\lambda}^2,\]
with $\lambda=\varphi(\cot\theta).$  The equation $-1+{\lambda}^2+ (-1/2+3\la/2)x=0$ defines implicitly the
slow manifold as a pair of smooth curves with an asymptote on $\theta_0$ with 
$\varphi(\cot\theta_0)=1/3.$
The slow flow is determined by $ \dot{x}={\lambda}^2$ , thus it is locally equivalent to $\dot{x}=1.$
See figure \eqref{figFNel}.

\subsection{Parabolic fold}

We say that $X_N=(X_+,X_-)$  is the normal form of the parabolic fold PSVF defined on $\R^2$ if 
$X_+(x,y)=(-1,-x)$, $X_-(x,y)=(1,2x)$,   $h(x,y)=y$ 
and $\Sigma=h^{-1}(0)$.

\begin{figure}[h!]
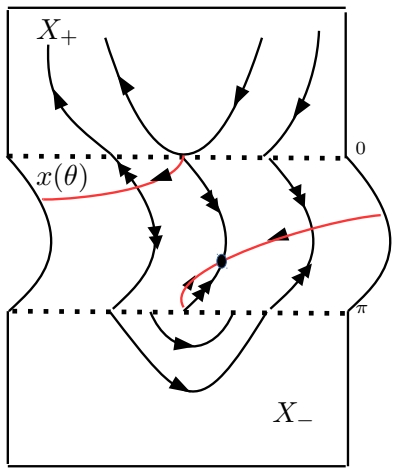

\begin{center}
\begin{overpic}[width=6cm]{DIP-fn7.pdf}
\put(60,37){\tiny$\pi$}\put(60,62){\tiny$0$}\put(47,20){$X_-$}\put(10,80){$X_+$}\put(10,57){$x(\theta)$}
\end{overpic}
\caption{Slow-fast system obtained blowing up the nonlinear regularization of 
the normal form of parabolic fold PSVF.}\label{figPar}
\end{center}
\end{figure}

\begin{proposition}\label{teo-parabolic} Let  $X_N=(X_+,X_-)$  be the normal form of the parabolic fold PSVF and the continuous combination
\[\widetilde{X}=(-\la+P(\la),     Q(\la) +(1/2-3\la/2)x )\] 
 The singular perturbation problem \eqref{eqteo} is \[r\dot{\theta}=-\sin\theta\left(Q(\psi(\theta))+(1/2-3\psi(\theta)/2)x\right),\quad \dot{x}=-\psi(\theta)+P(\psi(\theta)).\] 
 The slow manifold  a graphic $x=x(\theta)$ given implicitly by  $x(3\psi(\theta)-1)=2Q(\psi(\theta))$, satisfying that  $x(0)=x(\pi)=0$ 
and tends to $\pm\infty$ if $\theta$ tends to $\theta_0$ with $\psi(\theta_0)=1/3$. 
 The reduced flow is given by $\dot{x}=\sgn(-\psi(\theta_0)+P(\psi(\theta_0)))$. 
\end{proposition}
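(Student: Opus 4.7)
The plan is to apply Theorem \ref{mainteo} directly and read off every assertion from the explicit continuous combination. Since $h(x,y)=y$, writing $\widetilde{X}(\la,x,y)=(s_1(\la,x,y),s_2(\la,x,y))$ we have
\[s_1(\la,x,y)=-\la+P(\la), \quad s_2(\la,x,y)=Q(\la)+\tfrac{1}{2}(1-3\la)x,\]
neither of which depends on $y$. So first I would apply the polar blow-up $y=r\cos\theta$, $\e=r\sin\theta$ exactly as in the proof of Theorem \ref{mainteo} and set $\psi(\theta)=\varphi(\cot\theta)$; this yields the slow--fast system displayed in the statement essentially by inspection, since the $r\cos\theta$ entry of the last argument drops out.

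Second, I would determine the slow manifold $\mathcal{S}$ by setting the right-hand side of the $\theta$-equation to zero, obtaining the implicit equation $x(3\psi(\theta)-1)=2Q(\psi(\theta))$ stated in the proposition. On any interval where $3\psi(\theta)-1\neq 0$ this defines $x$ as a smooth function of $\theta$. The boundary conditions $x(0)=x(\pi)=0$ follow from the normalisation $Q(\pm 1)=0$ imposed on $Q$ in \eqref{contcomb}, together with $\psi(0)=1$, $\psi(\pi)=-1$ and the non-vanishing of the coefficient $3\psi(\theta)-1$ at those endpoints (equal to $2$ and $-4$, respectively). Because $\psi$ is strictly decreasing from $1$ to $-1$, the value $1/3$ is attained at a unique $\theta_0\in(0,\pi)$; at this point the coefficient of $x$ vanishes while the right-hand side equals $2Q(1/3)$, and so for generic $Q$ one has $x(\theta)\to\pm\infty$ as $\theta\to\theta_0$.

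Finally, the reduced flow on $\mathcal{S}$ is obtained by restricting the $x$-equation to the slow manifold. Because $\dot{x}=-\psi(\theta)+P(\psi(\theta))$ depends only on $\theta$, on each branch of $\mathcal{S}$ the sign of $\dot{x}$ is constant in $x$ and equal to $\sgn(-\psi(\theta)+P(\psi(\theta)))$ at the corresponding $\theta$, which is the stated formula. The whole argument is an essentially mechanical translation of the main theorem to this specific continuous combination, with no technical obstacle; the only point requiring a brief remark is the asymptote at $\theta_0$, which rests on the generic non-vanishing $Q(1/3)\neq 0$ (the degenerate case $Q(1/3)=0$ would need a separate case analysis but falls outside the generic picture the proposition describes).
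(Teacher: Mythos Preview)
Your proposal is correct and follows exactly the approach the paper uses for the analogous propositions (e.g.\ Propositions \ref{teo-saddlens}, \ref{teo-foldns}, \ref{nssaddle-node}): apply the polar blow-up of Theorem \ref{mainteo}, read off $\alpha_0$ and $\beta$, and analyse the resulting slow manifold and reduced flow. The paper in fact omits a separate proof of this proposition, relying on the reader to repeat that pattern; your write-up supplies precisely those details, including the use of $Q(\pm1)=0$ to get $x(0)=x(\pi)=0$ and the generic hypothesis $Q(1/3)\neq 0$ for the asymptote, both of which are implicit in the paper's treatment.
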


\noindent \textbf{Example 9.} Consider the PSVF $X_N=(X_+,X_-)$ with $X_+=(-1,-x)$, $X_-=(1,2x)$ and the continuous combination 
\[\widetilde{X}=(-1-\lambda+{\lambda}^2 ,-1+{\lambda}^2+(1/2-3\la/2)x).\]
The corresponding  slow--fast system is  
\[r\dot{\theta}=-\sin\theta( -1+{\lambda}^2+(1/2-3\la/2)x),\quad \dot{x}=-1-\lambda+{\lambda}^2,\]
with $\lambda=\varphi(\cot\theta).$  The equation $ -1+{\lambda}^2+(1/2-3\la/2)x=0$ defines implicitly the
slow manifold as a smooth curve  with an asymptote on $\theta_0$ with 
$\varphi(\cot\theta_0)=1/3.$
The slow flow is determined by $\dot{x}=-1-\lambda+{\lambda}^2$. Thus $\dot{x}<0$ if $\lambda\in(\lambda_0,1)$ and 
$\dot{x}>0$ if $\lambda\in(-1,\lambda_0)$, where $\lambda_0=\varphi(\cot\theta_0), \theta_0\in(\pi/2,\pi).$
See figure \eqref{figPar}. 

\section{Some Examples of PSVF's on $\R^3$}\label{s5}

In this section we present some examples of nonlinear regularization of PSVF's on $\R^3$. 
In the previous section, the continuous combinations of $X_+$ and $X_-$ had nonlinear terms depending only on $\lambda$. 
Indeed, in all the examples we considered $ \widetilde {X}$ of the kind \eqref{contcomb}. \\

Now, in order to obtain some generic situations, the examples of continuous combinations of discontinuous systems in $\R^3$ 
will have the nonlinear terms  also  depending on $p\in\Sigma$, with $\Sigma=\{(x,y,0)\in\mathbb{R}^3\}$. \\

\noindent \textbf{Example 10.} Consider the PSVF $X=(X_+,X_-)$ with $X_+=(0,0,-1)$, $X_-=(0,1,1)$ and the continuous combination 
\[\widetilde{X}=(P(\la,x,y,z),1/2-\la/2+Q(\la,x,y,z),-\la+R(\la,x,y,z)),\] where $P(\la,x,y,z)=x(\la^2-1)$, $Q(\la,x,y,z)=-y/2(\la^2-1)$ and $R(\la,x,y,z)=-\la(\la^2-1)$ 
are continuous polynomials satisfying the condition $P(\pm 1,x,y,z)=Q(\pm 1,x,y,z)=R(\pm 1,x,y,z)=0$.  Thus we have \[\widetilde{X}=(x(\la^2-1),1/2-\la/2-y/2(\la^2-1), -\la-\la(\la^2-1)).\] 

The nonlinear sliding region, $\Sigma_n^{sl}$, is $\Sigma$ and for any $p\in\Sigma_n^{sl}$ the nonlinear sliding vector field is given by \[\dot{x}=-x,\,\dot{y}=1/2+y/2.\]

Now, let us consider the nonlinear regularization given by \linebreak $X^{\e}=\widetilde{X}(\varphi (z/\e),x,y,z)$ and apply Theorem (2.1).
We get the singular perturbation problem

\[\begin{array}{lll}
r\dot{\theta}&=&-\sin\theta(-\psi(\theta)^3)\\
 \dot{x}&=&x(\psi(\theta)^2-1)\\
  \dot{y}&=&(1+y)/2-\psi(\theta)/2-y\psi(\theta)^2/2.
\end{array}\]

The slow manifold is $\theta=\pi/2$ and the corresponding  slow system is determined by $\dot{x}=-x,\quad \dot{y}=1/2+y/2$, and it has a saddle point in $(0,-1)$. See figure \eqref{fig:sewing-saddle}.\\

\begin{figure}[h!]
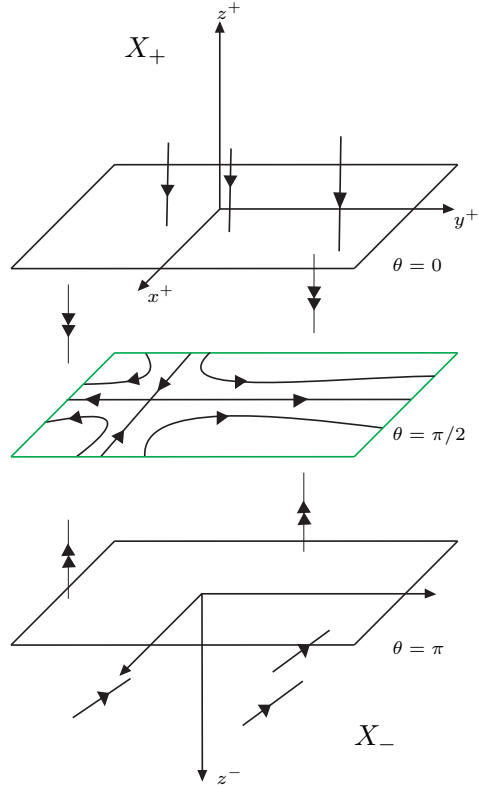

\begin{center}
\begin{overpic}[width=6cm]{sewingR3.pdf}
\put(50,17){\tiny$\theta=\pi$}\put(50,67){\tiny$\theta=0$}\put(45,5){$X_-$}\put(15,95){$X_+$}\put(50,45){\tiny $\theta=\pi/2$}\put(27,100){\tiny$z^+$}\put(58,73){\tiny$y^+$}\put(18,63){\tiny$x^+$}\put(27,0){\tiny$z^-$}
\end{overpic}
\caption{Slow-fast system obtained blowing up the nonlinear regularization of 
a sewing PSVF on $\R^3$.}\label{fig:sewing-saddle}
\end{center}
\end{figure}

\noindent \textbf{Example 11.} Consider the PSVF $X=(X_+,X_-)$ with $X_+=(1,0,x)$, $X_-=(0,0,1)$. The regular points in $\Sigma$ are $\{(x,y,0)\in\mathbb{R}^3:\,x\neq 0\}$
(sewing if $x>0$ and sliding if $x<0$). The singular points in $\Sigma$ are $\{(0,y,0)\in\mathbb{R}^3\}$.

Let us consider a continuous combination of $X_+$ and $X_-$ given by
\[\widetilde{X}=\left(1/2+\la/2+P,Q,\dfrac{x+1}{2}+\dfrac{x-1}{2}\la+R\right),x\neq 0, \]
where  
\[\begin{array}{ll}
P=P(\la,x,y,z)=(\la^2-1)(-2+2x+x^2-x^3)/4x,\\
Q=Q(\la,x,y,z)=(\la^2-1)(y-2xy+x^2y)/4x,\\
R=R(\la,x,y,z)=0.
\end{array}\]
Solving $$\dfrac{x+1}{2}+\dfrac{x-1}{2}\la=0$$ in the variable $\la$ we have that the nonlinear sliding region is $$\Sigma^{sl}_n=\{(x,y,0)\in\mathbb{R}^3:\,x<0\}.$$ The sliding vector field is given by $X^\la=(-x-1,y,0).$\\

Let $X^{\e}=\widetilde{X}(\varphi (z/\e),x,y,z)$ be the $\varphi$-nonlinear regularization. For any $p\in\Sigma^{sl}_n$, the associated singular perturbation problem  is

\begin{displaymath}
\begin{array}{lcl}
r\dot{\theta}&=&-\sin\theta\left( \dfrac{x+1}{2}+\dfrac{x-1}{2}\psi(\theta)\right),\\
\dot{x}&=&1/2+\psi(\theta)/2+\dfrac{-2+2x+x^2-x^3}{4x}(\psi(\theta)^2-1),\\
 \dot{y}&=&\dfrac{y-2xy+x^2y}{4x}(\psi(\theta)^2-1),
\end{array}
\end{displaymath}
with $\psi(\theta)=\varphi(\cot\theta)$.  The slow manifold is the curve implicitly defined  by \[\psi(\theta)=-\dfrac{x+1}{x-1},\] and 
the slow flow is determined by $\dot{x}=-x-1,\quad \dot{y}=y$. In this case, the slow flow has a saddle in $(-1,0)$. See figure \eqref{fig:fold-reg-space}.\\

\begin{figure}[h!]
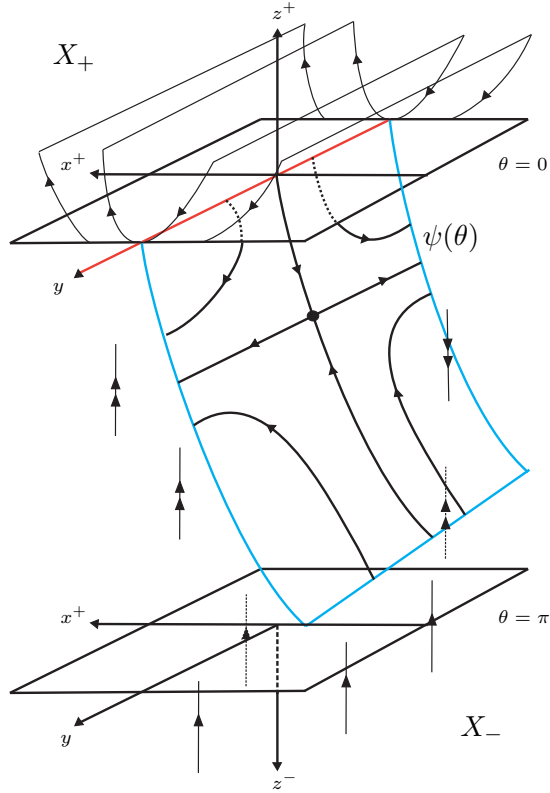

\begin{center}
\begin{overpic}[width=7cm]{fold-reg-space.pdf}
\put(65,20){\tiny$\theta=\pi$}\put(65,80){\tiny$\theta=0$}\put(60,5){$X_-$}\put(6,94){$X_+$}\put(55,70){$\psi(\theta)$}\put(35,100){\tiny$z^+$}\put(6,64){\tiny$y$}\put(7,4){\tiny$y$}\put(7,20){\tiny$x^+$}\put(7,80){\tiny$x^+$}\put(35,-2){\tiny$z^-$}
\end{overpic}
\caption{Slow-fast system obtained blowing up the nonlinear regularization of 
the normal form fold regular PSVF on $\R^3$.}\label{fig:fold-reg-space}
\end{center}
\end{figure}

Now, take the continuous combination $\widetilde{X}$ of $X_+$ and $X_-$ with 
\[\begin{array}{ll}
P=P(\la,x,y,z)=-(\la^2-1)(1+x)(-1+y+xy)/4x,\\

Q=Q(\la,x,y,z)=(\la^2-1)(x+1)^2,\\
R=R(\la,x,y,z)=\dfrac{x+1}{2}(\la^2-1).
\end{array}\]
By solving $$\dfrac{x+1}{2}+\dfrac{x-1}{2}\la+\frac{x+1}{2}(\la^2-1)=0$$ in the variable $\la$, we find  that 
$\Sigma^{sl}_n=\Sigma\setminus\{x=0\}$; besides on $\{(x,y,0)\in\Sigma:\,x>0\}$ we have two nonlinear sliding vector field and on $\{(x,y,0)\in\Sigma:\,x<0\}$ only one.

The sliding vector field $X^{\lambda_1}$ is 
\[\dot{x}=1/2+\dfrac{(1+x)(-1+y+xy)}{4x},\quad  \dot{y}=-(1+x)^2,\quad \dot{z}=0;\]
and the sliding vector field $X^{\lambda_2}$ is 
\[\dot{x}=y, \quad
 \dot{y}=-4x, \quad
\dot{z}=0.\]
%
%
%
The $\varphi$-nonlinear regularization $X^{\e}=\widetilde{X}(\varphi (z/\e),x,y,z)$. For any $p\in\Sigma^{sl}_n$, the singular perturbation problem associated is 

\begin{displaymath}
\begin{array}{lcl}
r\dot{\theta}&=&-\sin\theta\left( \psi(\theta)\left(\dfrac{x-1}{2}+\dfrac{x+1}{2}\psi(\theta)\right)\right)\\
\vspace{0.2cm}
\dot{x}&=&1/2+\psi(\theta)/2+\dfrac{-(1+x)(-1+y+xy)}{4x}(\psi(\theta)^2-1)\\
\vspace{0.2cm}
 \dot{y}&=&(x+1)^2(\psi(\theta)^2-1)
\end{array}
\end{displaymath}
where $\psi(\theta)=\varphi(\cot\theta)$. Then the slow manifold is the curve given by the plane $\theta=\pi/2$ and $\psi(\theta)=\frac{1-x}{1+x}$, $x>0$. The slow flow on $\theta=\pi/2$ is given by 

\[\dot{x}=1/2+\dfrac{(1-x)(-1+y+xy)}{4x}, \quad
 \dot{y}=-(x+1)^2,\]
and for $\psi(\theta)=\dfrac{1-x}{1+x}$, the slow flow is 
\[\dot{x}=y, \quad 
 \dot{y}=-4x.\]
The slow-fast system obtained blowing up the nonlinear regularization is shown in figure \eqref{fig:fold-regular-cruz}.

\begin{figure}[h!]
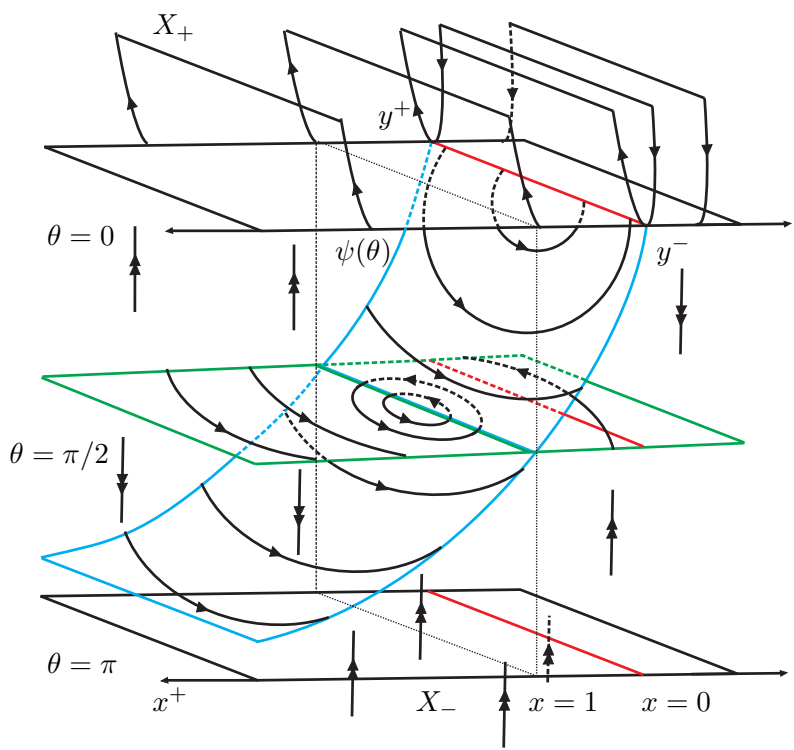

\begin{center}
\begin{overpic}[width=10cm]{fold-cruz.pdf}
\put(1,10){$\theta=\pi$}\put(1,67){$\theta=0$}\put(50,5){$X_-$}\put(15,95){$X_+$}\put(38,65){ $\psi(\theta)$}\put(45,83){$y^+$}\put(82,65){$y^-$}\put(15,5){$x^+$}\put(-4,38){$\theta=\pi/2$}
\put(65,5){$x=1$}\put(80,5){$x=0$}\end{overpic}
\end{center}
\caption{Slow-fast system obtained blowing up the nonlinear regularization of 
the regular fold PSVF on $\R^3$.}\label{fig:fold-regular-cruz}
\end{figure}

\section{Acknowledgments} The first  author is partially supported by  CAPES, CNPq, 
FAPESP, FP7-PEOPLE-2012-IRSES 318999, and PHB 2009-0025-PC.  The second author is partially supported by PNPD-CAPES scholarship.  The third author is partially supported by FAPESP 2016/11471-2. All authors thank the hospitality of CRM  (Centre de Recerca Matem\`{a}tica, Barcelona) during our visit in April 2016.

\end{document}